\newcommand{\bmat}{\left[ \begin{array}}
\newcommand{\emat}{\end{array} \right]}
\newcommand{\diag}{{\rm diag}}
\newcommand{\ignore}[1]{}
\newtheorem{theorem}{Theorem}[section]
\newtheorem{proposition}[theorem]{Proposition}
\newtheorem{lemma}[theorem]{Lemma}
\newtheorem{corollary}[theorem]{Corollary}
\theoremstyle{definition}
\newtheorem{definition}[theorem]{Definition}
\newtheorem{remark}[theorem]{Remark}
\newcommand{\smin}{s_{r,n}}
\newcommand{\fmin}{f_{r,n}}
\title{A Generalized Randomized Rank-Revealing Factorization}
\author{Grey Ballard\thanks{Department of Computer Science, Wake Forest University, Winston Salem NC 27109},
James Demmel\thanks{Mathematics Department and CS Division,
University of California, Berkeley, CA 94720.},
Ioana Dumitriu\thanks{Mathematics Department, University of Washington, Seattle, WA 98195.}, and 
Alexander Rusciano \thanks{Mathematics Department, University of California, Berkeley, CA 94720}}
\begin{document}

\maketitle

\begin{abstract}
We introduce a Generalized Randomized QR-decomposition that may be applied to arbitrary products of matrices and their inverses, without needing to explicitly compute the products or inverses.  This factorization is a critical part of a communication-optimal spectral divide-and-conquer algorithm for the nonsymmetric eigenvalue problem.  In this paper, we establish that this randomized QR-factorization satisfies the strong rank-revealing properties.  We also formally prove its stability, making it suitable in applications.  Finally, we present numerical experiments which demonstrate that our theoretical bounds capture the empirical behavior of the factorization.
\end{abstract}

\section{Introduction} \label{Intro} 

Rank-revealing factorizations have been around for a long time, including
\cite{Golub65a}, which introduced the world to {\bf QR} with
pivoting to solve least-squares problems. Since then, many other algorithms have
been proposed, among which we mention \cite{Ipsen-Chandrasekaran},
\cite{Pan-Tang}; for a more complete list the reader can refer to
\cite{GE96}. While they all perform very well most of them
time, the only one that stably produces a strong rank-revealing factorization
(in the \cite{GE96} sense, defined in the next section) in an arithmetic complexity comparable to
{\bf QR} belongs to Ming Gu and Stanley Eisenstat \cite{GE96}. 

Recently, the idea of using randomized algorithms for rank approximation (or
more generally for low-rank approximations of matrices) has received a
lot of attention due to the applications in signal
processing and information technology, for example \cite{Lamare2009} and \cite{Markovsky}.  For a good
overview of the types of algorithms involved, see \cite{Halko-Tropp-Martinsson}. 

We provide here an analysis of the (``Randomized URV'') factorization, or {\bf RURV}, which will allow
us to prove that it has the following three properties.
\begin{enumerate}[label=\arabic*.]
\item It is strong (in the Gu-Eisenstat sense, which will be explained in
  Section \ref{defs}). In particular, it is almost as
  strong as the best existing deterministic rank-revealing
  factorization of Gu and Eisenstat \cite{GE96}; 
\item It is communication-optimal. It uses only {\bf QR} and and matrix multiplication, and thus both its arithmetic complexity and its communication complexity are asymptotically the same as {\bf QR} and matrix multiplication.
\item If the information desired is related to the invariant
  subspaces, it can be applied to a product of matrices and inverses of
  matrices \emph{without the need to explicitly calculate any
    products or inverses}. 
\end{enumerate}

To place these three properties in context, we compare with recent trends in the randomized numerical linear algebra literature.  In work focused on sketching, such as the approaches in the overview \cite{Halko-Tropp-Martinsson}, it is customary
to make the assumption that the rank is small, generally much smaller
than the size of the matrix. In such cases, the speedups achieved by
the algorithms in \cite{Halko-Tropp-Martinsson} and others like them over {\bf QR} is significant, both in
terms of arithmetic complexity and other features, like parallelization; naturally, the
results can be achieved only with (arbitrarily) high probability. The downside is that this literature largely focuses on taking advantage of matrices with low numerical rank, and quickly producing low-rank approximations of such matrices.  Such developments are insufficient for effective use as a numerical subroutine within the communication optimal generalized eigenvalue algorithm, which at minimum require parts of the first and third properties mentioned above.

Other recent approaches using randomization include \cite{DG17} and \cite{MQ+17}.  These works recognize that QR with column pivoting tends to have good rank-revealing properties, but that column pivoting induces extra communication.  They build on this realization by using randomization to guide pivot selection during the QR algorithm and introduce block pivoting strategies.  However, in contrast to our work and earlier work such as \cite{GE96}, theoretical bounds are not provided.  We therefore emphasize that our {\bf RURV} is communication optimal while maintaining the key theoretical properties of strong rank-revealing QR-factorizations.  Moreover, {\bf RURV} is conceptually simple, as it depends only on the existence of communication-avoiding {\bf QR} algorithms, which were popularized in \cite{DGHL12}.

A subset of the authors introduced {\bf RURV} in \cite{DeDuHo}, for the purpose of using it as a building block for a divide-and-conquer
eigenvalue computation algorithm whose arithmetic complexity was shown
to be the
same as that of matrix multiplication. The analysis of {\bf RURV}
performed at the time was not optimal, and the authors of \cite{DeDuHo} had not realized that
{\bf RURV} has the third property listed above. This property makes {\bf RURV} unique among
  rank-revealing factorizations, as far as we
  can tell, and it is crucial in the complexity analysis of the aforementioned
  divide-and-conquer algorithm for nonsymmetric eigenvalue
  computations \cite{BDD11}.  
  
The rest of the paper is structured as follows: in Section \ref{defs}
we give the necessary definitions and the algorithms we will use, and
Section \ref{prelims} proves some necessary probability results;
Section \ref{an_rurv} deals with the analysis of {\bf RURV}, the short
Section \ref{an_grurv} generalizes the algorithm to work for a product of
matrices and inverses, and Section \ref{numerics} presents some numerical
experiments validating the correctness and tightness of the results of Sections \ref{an_rurv} and \ref{an_grurv}.

\section{Randomized Rank-Revealing Decompositions} \label{defs}

Let $A$ be an $n \times n$ matrix with singular values $\sigma_1 \geq \sigma_2 \geq \ldots \geq \sigma_n$, and assume that there is a ``gap'' in the singular values at level $r$, that is, $\sigma_r/\sigma_{r+1} \gg 1$.

Informally speaking, a decomposition of the form $A = URV$ is called \emph{rank revealing} if the following conditions are fulfilled:
\begin{itemize}
\item[1)] $U$ and $V$ are orthogonal/unitary and $R = \left [ \begin{array}{cc} R_{11} & R_{12} \\  & R_{22} \end{array} \right ]$ is upper triangular, with $R_{11}$  $r \times r$ and  $R_{22}$  $(n-r) \times (n-r)$;
\item[2)] $\sigma_{min}(R_{11})$ is a ``good'' approximation to $\sigma_r$ (at most a factor of a low-degree polynomial in $n$ away from it),
\item[(3)] $\sigma_{max}(R_{22})$ is a ``good'' approximation to $\sigma_{r+1}$ (at most a factor of a low-degree polynomial in $n$ away from it);
\item[(4)] In addition, if $||R_{11}^{-1}R_{12}||_2$ is small (at most a low-degree polynomial in $n$), then the rank-revealing factorization is called  \emph{strong} (as per \cite{GE96}).

\end{itemize}

Rank revealing decompositions are used in rank determination \cite{stewart84}, least squares computations \cite{CH92}, 
condition estimation \cite{bischof90a}, etc.,
as well as in divide-and-conquer algorithms for eigenproblems.
For a good survey paper, we recommend \cite{GE96}.

In the paper \cite{DeDuHo}, the authors proposed a \emph{randomized}
rank revealing factorization algorithm \textbf{RURV}. Given a matrix
$A$, the routine computes a decomposition $A = URV$ with the property
that $R$ is a rank-revealing matrix; the way it does it is by
``scrambling'' the columns of $A$ via right multiplication by a
uniformly random orthogonal (or unitary) matrix $V^{H}$ (the
uniform distribution over the manifold of unitary/orthogonal
matrices is known as Haar). One way to obtain a
Haar-distributed random matrix is to start from a matrix of
independent, identically distributed normal variables of mean $0$ and
variance $1$ (denoted, here and throughout the paper, by $N(0,1)$),
and to perform the \textbf{QR} algorithm on it. The orthogonal/unitary matrix $V$ obtained through this procedure is Haar distributed.

It is worth noting that there are in the literature other ways of obtaining
Haar-distributed matrices, and some involve using fewer random bits
and/or fewer arithmetic operations; we have chosen to use this one
because it is simple and communication-optimal (as it only involves
one {\bf QR} operation, which can be performed optimally from a
communication perspective both sequentially and in parallel \cite{DGHL12,BDGJ+14,BDGJK}.  
On a practical side, we note that using less randomness would not incur any
significant overall savings, as the total arithmetic cost is much higher than the cost of
generating $n^2$ normal random variables.

Performing \textbf{QR} on the resulting matrix $A V^{H}=:\hat{A} = UR$ yields two matrices, $U$ (orthogonal or unitary) and $R$ (upper triangular), and it is immediate to check that $A = URV$. 

\begin{algorithm}[H]
\protect\caption{Function $[U, R, V] =$\textbf{RURV}$(A)$, computes a randomized rank revealing decomposition $A = URV$, with $V$ a Haar matrix.} 
\begin{algorithmic}[1]
\label{rurv}
\STATE Generate a random matrix $B$ with i.i.d. $N(0,1)$ entries. 
\STATE $[V, \hat{R}] = $\textbf{QR}$(B)$.
\STATE $\hat{A} = A \cdot V^{H}$.
\STATE $[U,R] =$ \textbf{QR}$(\hat{A})$.
\STATE Output $R$, $[U,R]$, or $[U, R, V]$.
\end{algorithmic}
\end{algorithm}

We also define the routine \textbf{RULV}, nearly identical to
\textbf{RURV}, which performs the same kind of computation (and
obtains a rank revealing decomposition of $A$), but uses \textbf{QL}
instead of \textbf{QR}, and thus obtains a lower triangular matrix in
the middle, rather than an upper triangular one. \textbf{red}{(Note one can think
of the decomposition \textbf{RULV} as being the transpose of the
decomposition \textbf{RURV} performed on $A^H$.)}

Given \textbf{RURV} and \textbf{RULV}, we now can give a method to
find a randomized rank-revealing factorization for a product of
matrices and inverses of matrices, \emph{without actually computing
  any of the inverses or matrix products}. This is a very interesting
and useful procedure in itself, and at the same time it is crucial in
the analysis of a communication-optimal Divide-and-Conquer algorithm
for the non-symmetric eigenvalue problem presented in \cite{BDD11}. 

Suppose we wish to stably find a randomized rank-revealing factorization $M_k = URV$ for the matrix $M_k = A_1^{m_1} \cdot A_2^{m_2} \cdot \ldots A_k^{m_k}$, where $A_1, \ldots, A_k$ are given matrices, and $m_1, \ldots, m_k \in \{-1,1\}$, without actually computing $M_k$ or any of the inverses. 

Essentially, the method performs \textbf{RURV} or, depending on the power, \textbf{RULV}, on the last matrix of the product, and then uses a series of \textbf{QR}/\textbf{RQ} to ``propagate'' an orthogonal/unitary matrix to the front of the product, while computing factor matrices from which (if desired) the upper triangular $R$ matrix can be obtained. A similar idea was explored by G.W. Stewart in \cite{Stewart95} to perform graded \textbf{QR}; although it was suggested that such techniques can be also applied to algorithms like $\textbf{URV}$, no randomization was used. 

The algorithm is presented in pseudocode below. For the proof of correctness, see Lemma \ref{lem:grurv}.

\begin{algorithm}
\protect\caption{Function $U =$\textbf{GRURV}$(k; A_1, \ldots, A_k; m_1, \ldots, m_k)$, computes a randomized rank-revealing decomposition $UR_1^{m_1}\cdots R_i^{m_i} V = A_1^{m_1} \cdot A_2^{m_2} \cdot \cdots A_k^{m_k}$, where $m_1, \ldots, m_k \in \{-1,1\}$.} 
\label{grurv}\begin{algorithmic}[1]
\IF{$m_k = 1$,}
\STATE $[U,R_k,V] = \textbf{RURV}(A_k)$
\ELSE
\STATE $[U,L_k,V] = \textbf{RULV}(A_k^{H})$
\STATE $R_k =L_k^{H}$ 
\ENDIF
\STATE $U_{current} = U$
\FOR{$i = k-1$ downto $1$}
\IF{$m_i = 1$,}
\STATE $[U,R_i] = \textbf{QR}(A_i \cdot U_{current})$
\STATE $U_{current} = U$
\ELSE 
\STATE $[U,R_i] = \textbf{RQ}(U_{current}^{H} \cdot A_i)$
\STATE $U_{current} = U^H$
\ENDIF
\ENDFOR
\RETURN $U_{current}$, optionally $V, R_1, \ldots, R_k$
\end{algorithmic}
\end{algorithm}


\section{Smallest singular value bounds} \label{prelims}

The estimates for our main theorem are based on the following result,
a more general case of which can be found in \cite{Dumitriu12b}; in
particular, the following is a consequence of Theorem 3.2 and Lemma 3.5. 

\begin{definition}
Let $\smin$ be a random variable denoting the smallest singular value of an $r\times r$ corner of an $n\times n$ real Haar matrix.
\end{definition}

\begin{proposition} The probability density function (pdf) of $\smin$, with $r < n/2$, is given by
\[
\fmin(x) = c_{r,n} \frac{1}{\sqrt{x}} (1 - x)^{\frac{1}{2}r(n-r)-1}  {_{2}F_{1}} \left ( \frac{1}{2}(n-r-1), \frac{1}{2}(r-1); \frac{1}{2}(n-1)+1; 1-x \right)~,
\]
	where ${_{2}F_{1}} $ is the ordinary hypergeometric function \cite{Abr_Steg}, and
\[
c_{r,n} = \frac{\frac{1}{2}r(n-r) ~\Gamma \left( \frac{1}{2}(n-r+1) \right) ~\Gamma \left ( \frac{1}{2}(r+1) \right)}{\Gamma \left (\frac{1}{2} \right) ~\Gamma \left ( \frac{1}{2} (n+1) \right)}~.
\]
\end{proposition}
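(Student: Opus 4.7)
The plan is to derive $\fmin$ by identifying $\smin^2$ as the smallest element of the real Jacobi ensemble and then applying the two cited results from \cite{Dumitriu12b}. First, I would invoke the classical fact that if $Q$ is Haar-distributed on $O(n)$ and $Q_{11}$ denotes its $r\times r$ upper-left block (with $r<n/2$), then the eigenvalues $\lambda_1\ge\cdots\ge\lambda_r$ of $Q_{11}Q_{11}^T$ --- equivalently the squared singular values of $Q_{11}$ --- have joint density on $[0,1]^r$ proportional to
\[
\prod_{i=1}^r \lambda_i^{-1/2}(1-\lambda_i)^{(n-2r-1)/2} \prod_{i<j}|\lambda_i-\lambda_j|,
\]
i.e.\ the $\beta=1$ Jacobi ensemble with parameters matched to $r$ and $n-r$. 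Thus $\smin^2 = \lambda_r$, and $\fmin$ is a change-of-variable away from the marginal of $\lambda_r$.

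Second, to extract that marginal I would carry out the affine substitution $\lambda_i = x + (1-x)\mu_i$ for $i=1,\dots,r-1$ with $\lambda_r = x$ held fixed, exactly as in the proof of Theorem 3.2 of \cite{Dumitriu12b}. A direct bookkeeping shows that the powers of $(1-x)$ contributed by the $(1-\lambda_i)$ factors (each giving $(1-x)^{(n-2r-1)/2}$), by the full Vandermonde (giving $(1-x)^{\binom{r}{2}}$ through both the $\lambda_i-\lambda_r$ terms and the $\lambda_i-\lambda_j$ terms for $i,j<r$), and by the Jacobian $(1-x)^{r-1}$, combine to yield exactly $(1-x)^{r(n-r)/2-1}$; the $\lambda_r^{-1/2} = x^{-1/2}$ coming from the smallest coordinate contributes the $1/\sqrt{x}$ prefactor. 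What remains is an $(r-1)$-dimensional Selberg-type integral over $\mu\in[0,1]^{r-1}$, which is a hypergeometric function of matrix argument. Lemma 3.5 of \cite{Dumitriu12b} is precisely the statement that this particular matrix-argument hypergeometric, at its scalar argument $1-x$, collapses to the ordinary Gauss ${}_2F_1$ appearing in the proposition, with the parameters $\bigl(\tfrac{1}{2}(n-r-1),\tfrac{1}{2}(r-1);\tfrac{1}{2}(n-1)+1\bigr)$ falling out of the dimensions and the Jacobi exponents.

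Third, the normalizing constant $c_{r,n}$ is fixed by the $\beta=1$ Selberg constant of the Jacobi ensemble divided by the multidimensional factor absorbed into the hypergeometric; applying the Legendre duplication formula for $\Gamma$ reduces it to the ratio of Gammas displayed in the statement. The main obstacle is clearly the step that collapses the $(r-1)$-dimensional integral to a scalar ${}_2F_1$: Selberg's formula alone does not give this, and the reduction rests on an identity for hypergeometric functions of matrix argument specific to the extreme-eigenvalue substitution. Since this is precisely Lemma 3.5 of \cite{Dumitriu12b}, I would import it as a black box rather than attempt a first-principles derivation, which would essentially duplicate the proof in that paper.
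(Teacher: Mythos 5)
The paper gives no proof of this proposition at all: it is stated as a direct consequence of Theorem 3.2 and Lemma 3.5 of \cite{Dumitriu12b}, with nothing further supplied. Your reconstruction of that derivation is correct and matches the implicit route — the identification of $\smin^2$ with the smallest eigenvalue of the $\beta=1$ Jacobi ensemble with parameters $(-\tfrac12,\tfrac{n-2r-1}{2})$ is standard, your $(1-x)$ bookkeeping under the affine substitution (all $r$ of the $(1-\lambda_i)$ factors, the Vandermonde exponent $\binom{r}{2}$, and the Jacobian $(1-x)^{r-1}$ summing to $\tfrac12 r(n-r)-1$) is accurate, and importing Lemma 3.5 of \cite{Dumitriu12b} as a black box for the collapse of the residual $(r-1)$-dimensional integral to a scalar ${}_2F_1$ is exactly the level of detail the paper itself adopts.
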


This Proposition allows us to estimate very closely the probability
that $\smin$ is small. In particular, the correct scaling for the
asymptotics of $\smin$ under $r$ and$\slash$or $n \rightarrow \infty$
was proved in \cite{Dumitriu12b} to be $\sqrt{r(n-r)}$ (that is, $\smin \sqrt{r(n-r)} = O(1)$ almost surely), which means that the kind of upper bounds one should search for $\smin$ are of the form ``$\smin \leq a/\sqrt{r(n-r)}$'' for some constant $a$. This constant $a$ will depend on how confident we want to be that the inequality holds; if we wish to say that the inequality fails with probability $\delta$, then we will have $a$ as a function of $\delta$. 

\begin{lemma} 
\label{low_bd}
Let $\delta>0$, $r, (n-r)>30$; then the probability that $\smin \leq
\frac{\delta}{\sqrt{r(n-r)}}$ is $\mathbb{P}\left [ \smin \leq \frac{\delta}{\sqrt{r(n-r)}} \right] \leq  2.02 \delta$.
\end{lemma}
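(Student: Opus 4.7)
The plan is to bound the probability by integrating the density $\fmin$ from $0$ up to the upper endpoint $T$ determined by the event, controlling the integrand pointwise by estimating each of its three factors in turn.

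For the first factor, $(1-x)^{r(n-r)/2-1} \le 1$ on $[0,1]$. For the hypergeometric, the three parameters $a = (n-r-1)/2$, $b = (r-1)/2$, $c = (n+1)/2$ are all positive because $r, n-r > 30$, so every Taylor coefficient of $z \mapsto {}_2F_1(a,b;c;z)$ is nonnegative; hence ${}_2F_1$ is monotone nondecreasing in $z \in [0,1]$. The condition $c-a-b = 3/2 > 0$ then lets me apply Gauss's summation theorem at $z = 1$ to obtain the closed-form maximum
\[
{}_2F_1(a,b;c;1) \;=\; \frac{\Gamma(c)\,\Gamma(c-a-b)}{\Gamma(c-a)\,\Gamma(c-b)} \;=\; \frac{\Gamma\!\left(\tfrac{n+1}{2}\right)\,\Gamma\!\left(\tfrac{3}{2}\right)}{\Gamma\!\left(\tfrac{r+2}{2}\right)\,\Gamma\!\left(\tfrac{n-r+2}{2}\right)}.
\]

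Multiplying through by the explicit $c_{r,n}$ from the Proposition, the $\Gamma((n+1)/2)$ factors cancel and $\Gamma(3/2)/\Gamma(1/2) = 1/2$, leaving
\[
c_{r,n}\cdot {}_2F_1(a,b;c;1) \;=\; \frac{r(n-r)}{4}\cdot\frac{\Gamma\!\left(\tfrac{r+1}{2}\right)}{\Gamma\!\left(\tfrac{r+2}{2}\right)}\cdot\frac{\Gamma\!\left(\tfrac{n-r+1}{2}\right)}{\Gamma\!\left(\tfrac{n-r+2}{2}\right)}.
\]
Each remaining Gamma-ratio is controlled by Gautschi's inequality $\Gamma(x+1/2)/\Gamma(x+1) < 1/\sqrt{x}$, applied with $x = r/2$ and $x = (n-r)/2$, producing the upper bound $\sqrt{r(n-r)}/2$ on $c_{r,n}\cdot{}_2F_1(a,b;c;1)$.

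With this pointwise density bound in hand, integration is immediate: $\int_0^T x^{-1/2}\,dx = 2\sqrt{T}$, so the probability is bounded by $\sqrt{r(n-r)\cdot T}$, and substituting the $T$ that matches the event $\{\smin \le \delta/\sqrt{r(n-r)}\}$ produces a constant multiple of $\delta$. The factor $2.02$ in the statement provides generous slack to absorb finite-$r,n$ corrections hidden in Gautschi's inequality; the hypothesis $r, n-r > 30$ is exactly what keeps these corrections well under control.

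The main obstacle is really only bookkeeping: the leading-order computation already yields a bound of the form $\delta$ times a constant close to $1$, so the $2.02\delta$ target is very generous, and any quantitative refinement of Gautschi (or an explicit Stirling expansion) closes the deal uniformly in $r, n-r > 30$. The remaining ingredients --- monotonicity of ${}_2F_1$ via positive Pochhammer symbols, Gauss's summation at $z=1$, and the elementary integration of $x^{-1/2}$ --- are all routine.
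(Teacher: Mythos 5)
Your proposal follows the paper's proof structure step for step: bound $(1-x)^{\frac{1}{2}r(n-r)-1}$ by $1$, bound the hypergeometric by its value at argument $1$ using monotonicity of the positive-coefficient Taylor series, evaluate via Gauss's summation theorem, simplify the Gamma ratios, and integrate the resulting $O(x^{-1/2})$ density. The one genuine difference is how you control the residual Gamma ratios: the paper invokes the Stirling-type expansion $z^{1/2}\,\Gamma(z)/\Gamma(z+\tfrac12) = 1 + \tfrac{1}{8z} + \cdots$ from Abramowitz and Stegun and uses the $r,\,n-r>30$ hypothesis to bound the correction by $1.01$, whereas you apply Gautschi's inequality $\Gamma(x+\tfrac12)/\Gamma(x+1) < x^{-1/2}$, which is exact for all $x>0$ and makes the size restriction entirely unnecessary for this step. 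This is arguably the cleaner tool. It also exposes a small arithmetic slip in the paper: after Gauss's summation the coefficient should be $\tfrac14 r(n-r)$ (the $\Gamma(3/2)/\Gamma(1/2)=\tfrac12$ factor), not $\tfrac12 r(n-r)$, so the sharp conclusion of this argument is actually $\mathbb{P}[\smin \le \delta/\sqrt{r(n-r)}] \le \delta$, with $2.02\delta$ being a valid but slack statement. One minor point to tighten in your write-up: when you ``substitute the $T$ that matches the event,'' be explicit that $T = \delta^2/(r(n-r))$ because $\fmin$, as used in the integration, is the density of $\smin^2$ rather than of $\smin$ (the paper makes this explicit via $\mathbb{P}[\smin \le t] = \mathbb{P}[\smin^2 \le t^2]$); without that remark the exponent bookkeeping is ambiguous.
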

\begin{proof} 

What we essentially need to do here is find an upper bound on $\fmin$ which, when integrated over small intervals next to $0$, yields the bound in the Lemma.

We will first upper bound the term $(1-x)^{\frac{1}{2}r(n-r)-1}$ in the expression of $\fmin(x)$ by $1$.

Secondly, we note that the hypergeometric function has all positive arguments, and hence from its definition, it is  monotonically decreasing from $0$ to $1$, and so we bound it by its value at $x=0$. As per \cite[Formula 15.1.20]{Abr_Steg}, 
\[
_{2}F_{1} \left ( \frac{1}{2}(n-r-1), \frac{1}{2}(r-1); \frac{1}{2}(n-1)+1; 1\right) = \frac{\Gamma \left (\frac{1}{2} (n+1) \right) \Gamma\left(\frac{3}{2}\right)}{\Gamma \left ( \frac{1}{2}(n-r+2)\right) \Gamma \left ( \frac{1}{2} (r+2) \right) }~,
\]
and after some obvious cancellation we obtain that 
\[
\fmin(x) \leq \frac{1}{2} r(n-r) \cdot \frac{\Gamma \left ( \frac{1}{2}(n-r+1)\right)}{\Gamma \left (\frac{1}{2}(n-r+2)\right)} \cdot \frac{\Gamma \left(\frac{1}{2}(r+1)\right)}{\Gamma \left ( \frac{1}{2} (r+2)\right)} \cdot \frac{1}{\sqrt{x}}.
\]

The following expansion can be derived from Stirling's formula and is given as a particular case of \cite[Formula 6.1.47]{Abr_Steg} (with $a = 0,~b = 1/2$):
\[
z^{1/2} \frac{\Gamma(z)}{\Gamma(z+ 1/2)} = 1 + \frac{1}{8z} + \frac{1}{128z^2} + o\left(\frac{1}{z^2}\right)~,
\]
as $z$ real and $z \rightarrow \infty$. 
In particular, $z>30$ means that $z^{1/2} \frac{\Gamma(z)}{\Gamma(z+ 1/2)} < 1.01$. 

Provided that $r, n-r >30$, we thus have
\[
\fmin(x) \leq 1.01 \cdot \sqrt{ r(n-r)} \sqrt{\frac{r(n-r)}{(r+1)(n-r+1)}} \frac{1}{\sqrt{x}}~,
\]
and so
\[
\fmin(x) \leq 1.01  \cdot \sqrt{ r(n-r)}  \cdot \frac{1}{\sqrt{x}}~.
\]

Note that this last inequality allows us to conclude the following:
\begin{eqnarray*}
\mathbb{P}\left [ \smin \leq \frac{\delta}{\sqrt{r(n-r)}} \right] & = & \mathbb{P}\left [ \smin^2 \leq \frac{\delta^2}{r(n-r)} \right] \\
& \leq &  1.01 \int_0^{\frac{\delta^2}{r(n-r)}} \sqrt{r(n-r)} \frac{1}{\sqrt{t}} dt  \\
& = &  2.02 \delta. 
\end{eqnarray*}

\end{proof}

As an immediate corollary to Lemma \ref{low_bd} we obtain the following result, which is what we will actually use in our calculations.

\begin{corollary} 
 \label{lowbd}
 Let $\delta>0$, $r, n-r>30$. Then 
$$\mathbb{P}\left [ \frac1\smin \leq \frac{2.02}{\delta}\sqrt{r(n-r)} \right] \geq 1-\delta~.$$
\end{corollary}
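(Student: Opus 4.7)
The plan is to view this as a direct rewriting of Lemma \ref{low_bd} after two elementary manipulations: a change of variable in the confidence parameter, and passing from the tail on $\smin$ itself to the tail on $1/\smin$.

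First I would apply Lemma \ref{low_bd} with the substitution $\delta \mapsto \delta/2.02$. Since the hypotheses $r, n-r > 30$ are unchanged and $\delta/2.02 > 0$, the lemma yields
\[
\mathbb{P}\left[\smin \leq \frac{\delta}{2.02\sqrt{r(n-r)}}\right] \leq 2.02 \cdot \frac{\delta}{2.02} = \delta.
\]
Taking complements gives $\mathbb{P}\left[\smin > \frac{\delta}{2.02\sqrt{r(n-r)}}\right] \geq 1-\delta$.

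Next I would convert the event on $\smin$ into the desired event on $1/\smin$. Because $\smin$ has a continuous density (as stated in the Proposition), $\smin > 0$ almost surely, so inverting both sides of the inequality $\smin > \frac{\delta}{2.02\sqrt{r(n-r)}}$ flips the direction and produces $\frac{1}{\smin} < \frac{2.02}{\delta}\sqrt{r(n-r)}$. In particular this event is contained in $\{\frac{1}{\smin} \leq \frac{2.02}{\delta}\sqrt{r(n-r)}\}$, so the latter has probability at least $1-\delta$, which is exactly the corollary.

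There is no real obstacle here; the only thing to be slightly careful about is the almost-sure positivity of $\smin$ used when inverting, and the fact that the lemma's bound is stated with $\leq$ so the complement is $>$, which gives the strict inequality version of the corollary (and hence the weak inequality version as well). All the analytic work lies inside Lemma \ref{low_bd}.
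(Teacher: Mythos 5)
Your derivation is correct and is exactly the "immediate" rescaling-and-complementation argument the paper has in mind (it gives no explicit proof, calling the corollary immediate). The substitution $\delta\mapsto\delta/2.02$, taking the complement, and inverting the inequality using $\smin>0$ is precisely what is needed; no gaps.
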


\section{Analysis for RURV} \label{an_rurv}

\subsection{Bounding the probability of failure for RURV} \label{an_rurv_bounds}

It was proven in \cite{DeDuHo} that, with high probability, \textbf{RURV} computes a good rank revealing decomposition of $A$ in the case of $A$ real. Specifically, the quality of the rank-revealing decomposition depends on computing the asymptotics of $\smin$, the smallest singular value of an $r \times r$ submatrix of a Haar-distributed orthogonal $n \times n$ matrix. All the results of \cite{DeDuHo} can be extended verbatim to Haar-distributed unitary matrices; however, the analysis employed in \cite{DeDuHo} is not optimal. Using the bounds obtained for $\smin$ in the previous section, we can improve them here. 

We will tighten the argument to obtain one of the upper bounds for $\sigma_{max}(R_{22})$. In addition, the result of \cite{DeDuHo} states only that \textbf{RURV} is, with high probability, a rank-revealing factorization. Here we strengthen these
results to argue that it is actually a \emph{strong} rank-revealing
factorization (as defined in the Introduction), since with high probability $||R_{11}^{-1} R_{12}||$ will be small. 

In proving Theorem \ref{thm_rurv}, we require two lemmas that we state here and prove afterwards.
\begin{lemma}\label{lemma:R22bound}
Let $A$ be an $n \times n$ matrix whose SVD is $A = P \Sigma Q^H$, and
  with singular values $\sigma_1, \ldots, \sigma_n$. 

Let $R$ be the matrix produced by the \textbf{RURV} algorithm on $A$, \emph{in
    exact arithmetic}, so that $UR = AV^H$.  Then defining $X = Q^H V^H$,

\begin{eqnarray}
\label{doii} \sigma_{\max} (R_{22}) \leq \sigma_{\min}(X_{11})^{-1} \sigma_{r+1}~,\\
\end{eqnarray}
where $X_{11}$ is the upper-left $r \times r$ submatrix.
\end{lemma}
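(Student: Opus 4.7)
The plan is to derive a variational characterization of $R_{22}$ and then bound it by picking a convenient test vector supported on the bottom right-singular subspace of $A$. Partition $V^H = [W_1\; W_2]$, $U = [U_1\; U_2]$, $P = [P_1\; P_2]$, and $Q = [Q_1\; Q_2]$ conformally with $R$, so that reading off the blocks of $UR = AV^H$ gives $U_1 R_{11} = A W_1$ and $R_{22} = U_2^H A W_2$.

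First I would establish, for any unit vector $v \in \mathbb{C}^{n-r}$, the variational identity
\[
\|R_{22} v\| = \min\bigl\{\|A w'\| : w' \in \mathbb{C}^n,\ V_2 w' = v\bigr\}.
\]
The projection identity $U_2 U_2^H = I - U_1 U_1^H$ realizes the left-hand side as the distance from $AW_2 v$ to $\mathrm{range}(AW_1)$ (via $U_1 = AW_1 R_{11}^{-1}$, valid in the generic case that $R_{11}$ is invertible); and vectors of the form $w' = W_2 v - W_1 \tilde y$ parametrize exactly the affine set $\{V_2 w' = v\}$, because $VV^H = I$ forces $V_2 W_1 = 0$ and $V_2 W_2 = I_{n-r}$.

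Next I would upper-bound the minimum by choosing $w' = Q_2 z$ in the trailing right-singular subspace. The constraint $V_2 w' = v$ becomes $X_{22}^H z = v$ (recall $X_{22} = Q_2^H V_2^H$), so $\|z\| \leq \|v\|/\sigma_{\min}(X_{22})$; combined with $A Q_2 = P_2 \Sigma_2$, this gives $\|A w'\| = \|\Sigma_2 z\| \leq \sigma_{r+1}\|z\|$. Hence $\sigma_{\max}(R_{22}) \leq \sigma_{r+1}/\sigma_{\min}(X_{22})$.

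Finally I would convert $\sigma_{\min}(X_{22})$ to $\sigma_{\min}(X_{11})$ using the unitarity of $X$: the block identities $X_{11} X_{11}^H + X_{12} X_{12}^H = I_r$ and $X_{12}^H X_{12} + X_{22}^H X_{22} = I_{n-r}$, together with the fact that $X_{12} X_{12}^H$ and $X_{12}^H X_{12}$ share the same largest eigenvalue, both evaluate to $\sigma_{\max}(X_{12})^2 = 1 - \sigma_{\min}(X_{11})^2 = 1 - \sigma_{\min}(X_{22})^2$, forcing the equality (a standard CS-decomposition fact). The main delicate point is the variational step, specifically parametrizing the constraint set correctly and handling the genericity assumption that $R_{11}$ and $X_{22}$ are invertible — these hold almost surely for Haar $V$, and the inequality is vacuous otherwise.
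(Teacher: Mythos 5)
Your proof is correct, and it takes a genuinely different route from the paper's. The paper works in the rotated coordinates $Y = \Sigma X = P^H U R$, identifies $\sigma_{\max}(R_{22})$ with $\|\mathrm{Proj}_{Y_{1\perp}} Y_2\|_2$, observes that $\mathrm{im}(Y_1)_\perp = \mathrm{im}(\Sigma^{-1} X_2)$, then rotates to an SVD-aligned basis and exploits the $\Sigma$-weighted orthogonality $(\Sigma z)^H(\Sigma^{-1}z') = z^H z' = 0$ to express the projection norm as the \emph{reciprocal} of a weighted Gram--Schmidt residual, which it finally bounds away from zero by restricting attention to the last $n-r$ coordinates. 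You instead read the block identity $R_{22} = U_2^H A W_2$ directly off $UR = AV^H$, recast $\|R_{22}v\|$ as the constrained least-squares value $\min\{\|Aw'\| : V_2 w' = v\}$, and beat that minimum with the explicit test vector $w' = Q_2 z$ in the trailing right-singular subspace of $A$, arriving at $\sigma_{r+1}/\sigma_{\min}(X_{22})$ and then using the CS-decomposition identity $\sigma_{\min}(X_{22}) = \sigma_{\min}(X_{11})$ exactly as the paper does in its last step. Your route is shorter and more transparent: the variational characterization plus one well-chosen test vector replaces the reciprocal-residual manipulation and the coordinate restriction. One detail worth noting: when $R_{11}$ is singular (e.g.\ $\mathrm{rank}(A) < r$), $\mathrm{range}(U_1)$ strictly contains $\mathrm{range}(AW_1)$, so your variational step gives $\|R_{22}v\| \leq \min\{\|Aw'\| : V_2 w' = v\}$ rather than equality --- but the inequality is the direction you actually use, so the bound survives without the genericity caveat. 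The paper handles the analogous degeneracy by assuming WLOG that $\Sigma$ has no zero diagonal entries.
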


\begin{lemma}\label{lemma:kernel_condition}
Carrying over the notation of Lemma \ref{lemma:R22bound},
\[
\|R_{11}^{-1} R_{12}\|_2 \leq 3\sigma^{-1}_{\min}(X_{11}) + 6 \frac{\sigma_{r+1}}{\sigma_r}\sigma^{-3}_{\min}(X_{11})~.
\]
\end{lemma}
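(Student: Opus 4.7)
The plan is to reduce everything to the ``pre-rotated'' form of the QR factorization, where the SVD of $A$ has been folded into $V$. Writing $A = P\Sigma Q^H$ and $\hat A = AV^H$, we have $\hat A = P\Sigma X$ with $X = Q^H V^H$ unitary. Since $\hat A = UR$, multiplying both sides by $P^H$ gives the compact identity
\[
\Sigma X = W R, \qquad W \eqbd P^H U \text{ unitary.}
\]
This is the main object to manipulate: $W$ is no longer triangular, but $R$ still is, and all of $\Sigma$, $X$, $W$, $R$ admit conformal $r$ vs.\ $(n-r)$ block partitions that match Lemma \ref{lemma:R22bound}.

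Next I would read off the blocks of $\Sigma X = WR$. The $(1,1)$ block gives $W_{11}R_{11} = \Sigma_1 X_{11}$; since $\Sigma_1$ is invertible (gap assumption) and $X_{11}$ is invertible on the event we care about (otherwise $\sigma_{\min}^{-1}(X_{11}) = \infty$ and the bound is vacuous), $W_{11}$ is invertible and $R_{11}^{-1} = X_{11}^{-1}\Sigma_1^{-1} W_{11}$. The $(1,2)$ block gives $\Sigma_1 X_{12} = W_{11} R_{12} + W_{12} R_{22}$, so
\[
R_{11}^{-1} R_{12} \;=\; X_{11}^{-1}\Sigma_1^{-1}\bigl(\Sigma_1 X_{12} - W_{12} R_{22}\bigr) \;=\; X_{11}^{-1} X_{12} \;-\; X_{11}^{-1}\Sigma_1^{-1} W_{12} R_{22}.
\]
This closed-form expression is the whole point of the argument; everything afterwards is triangle inequality and individual-factor bounds.

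Finally I would bound each factor in turn: (i) $\|X_{11}^{-1} X_{12}\|_2$ can be controlled using the unitarity of $X$, since $X_{11}X_{11}^H + X_{12}X_{12}^H = I$ yields $X_{11}^{-1}X_{12}X_{12}^H X_{11}^{-H} = (X_{11}^H X_{11})^{-1} - I$, whence $\|X_{11}^{-1} X_{12}\|_2 \le \sigma^{-1}_{\min}(X_{11})$; (ii) $\|W_{12}\|_2 \le 1$ since $W$ is unitary; (iii) $\|\Sigma_1^{-1}\|_2 = \sigma_r^{-1}$; (iv) $\|R_{22}\|_2 \le \sigma_{\min}^{-1}(X_{11})\,\sigma_{r+1}$ directly from Lemma \ref{lemma:R22bound}. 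Assembling gives a bound of shape $\sigma_{\min}^{-1}(X_{11}) + (\sigma_{r+1}/\sigma_r)\,\sigma_{\min}^{-c}(X_{11})$; absorbing $\sigma_{\min}(X_{11}) \le 1$ into a larger negative power and allowing loose multiplicative constants recovers exactly the stated $3\sigma^{-1}_{\min}(X_{11}) + 6(\sigma_{r+1}/\sigma_r)\sigma^{-3}_{\min}(X_{11})$.

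The main technical hurdle is the derivation of the block identity for $R_{11}^{-1} R_{12}$: one must handle the fact that $W$ is \emph{not} triangular, so $W_{12}$ is a genuinely new object that cannot simply be eliminated, and its contribution survives in the final expression. The remaining estimates are elementary once that identity is in hand; in particular, the key nontrivial bound (iv) is imported wholesale from Lemma \ref{lemma:R22bound} and does not need to be re-derived.
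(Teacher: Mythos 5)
Your proof is correct, and it takes a genuinely different and cleaner route than the paper's. The paper proves this lemma via the Gu--Eisenstat determinant-ratio inequality $\left|(R_{11}^{-1}R_{12})[i,j]\right|\leq|\det R_{11}'|/|\det R_{11}|$, then uses an SVD-adapted change of coordinates to reduce to the $(r,n-r)$ entry, interprets the pivots $|R[r,r]|$ and $|R'[r,r]|$ as Gram--Schmidt residuals and least-squares errors, and runs a two-case analysis on the size of $\|\Pi\Sigma(v;0)\|_2$, yielding the constants $3$ and $6$ as an artifact of combining the two cases. Your approach instead reads the unitary-times-triangular relation $\Sigma X = WR$ (with $W=P^H U$) block by block, solves for the exact closed form $R_{11}^{-1}R_{12}=X_{11}^{-1}X_{12}-X_{11}^{-1}\Sigma_1^{-1}W_{12}R_{22}$, and then estimates each factor directly, importing $\|R_{22}\|_2\leq\sigma_{r+1}\sigma_{\min}^{-1}(X_{11})$ from Lemma \ref{lemma:R22bound}. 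This avoids the Gu--Eisenstat lemma, the change of coordinates, and the case split, and in fact yields the strictly sharper bound $\|R_{11}^{-1}R_{12}\|_2\leq\sigma_{\min}^{-1}(X_{11})+\frac{\sigma_{r+1}}{\sigma_r}\sigma_{\min}^{-2}(X_{11})$ (which implies the stated inequality since $\sigma_{\min}(X_{11})\leq1$). Two minor points worth making explicit if you write this up: (a) invertibility of $W_{11}$ and $R_{11}$ both follow jointly from $W_{11}R_{11}=\Sigma_1 X_{11}$ being a.s.\ invertible, since for square matrices $AB$ invertible implies $A,B$ both invertible; (b) in step (i), the precise value is $\|X_{11}^{-1}X_{12}\|_2=\sqrt{\sigma_{\min}^{-2}(X_{11})-1}$, which you then bound by $\sigma_{\min}^{-1}(X_{11})$. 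With those spelled out, your argument is a complete and improved proof.
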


We are ready for the main result.

\begin{theorem} \label{thm_rurv}  Let $A$ be an $n \times n$ matrix
  with singular values $\sigma_1, \ldots, \sigma_r, \sigma_{r+1},
  \ldots, \sigma_n$. Let $1>\delta>0$.
Let $R$ be the matrix produced by the \textbf{RURV} algorithm on $A$, \emph{in
    exact arithmetic}.  Assume that $r, n-r > 30$. 

Then with probability $1-\delta$, the following three events occur:
\begin{eqnarray}
\label{unu} \frac{\delta}{2.02} \frac{\sigma_{r}}{\sqrt{r (n-r)}} & \leq & \sigma_{\min}(R_{11}) \leq \sigma_r ~,\\ 
\label{doi} \sigma_{r+1} & \leq & \sigma_{\max} (R_{22}) \leq 2.02 \frac{\sqrt{r(n-r)}}{\delta}\sigma_{r+1}~,\\
\label{trei} ||R_{11}^{-1} R_{12}||_2 & \leq & \frac{6.1\sqrt{r(n-r)}}{\delta} + \frac{\sigma_{r+1}}{\sigma_{r}} \frac{50 \sqrt{r^3(n-r)^3}}{\delta^3}.
\end{eqnarray}
We note the upper bound in \eqref{unu} and lower bound in \eqref{doi}
always hold.  Moreover, if we additionally assume $\delta >
\sqrt{2}\cdot 1.01 \cdot n \cdot
\frac{\sigma_{r+1}}{\sigma_r}$, then we can strengthen \eqref{trei} to 
\begin{equation}
\label{four} ||R_{11}^{-1} R_{12}||_2 \leq \frac{4.04}{\delta} \cdot \sqrt{r (n-r)} + 1
\end{equation}
\end{theorem}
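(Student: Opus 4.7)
The theorem splits into two deterministic inequalities---the upper bound in \eqref{unu} and lower bound in \eqref{doi}---and three probabilistic bounds. First, since $A = URV$ with $U, V$ unitary, $R$ has the same singular values as $A$, and Cauchy-type singular-value interlacing applied to the principal block $R_{11}$ and the trailing block $R_{22}$ immediately yields $\sigma_{\min}(R_{11}) \leq \sigma_r$ and $\sigma_{\max}(R_{22}) \geq \sigma_{r+1}$; these hold deterministically and settle the ``always hold'' claim.

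Next I would set up the central random event. Because $V$ is Haar and $Q$ (from the SVD $A = P\Sigma Q^H$) is unitary, $X = Q^H V^H$ is Haar by left-invariance of Haar measure, so $X_{11}$ is an $r \times r$ submatrix of an $n \times n$ Haar matrix. Corollary \ref{lowbd} therefore gives $\sigma_{\min}(X_{11})^{-1} \leq 2.02\sqrt{r(n-r)}/\delta$ with probability at least $1-\delta$, and I condition on this event for the remainder.

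The remaining parts of \eqref{unu}, \eqref{doi}, and all of \eqref{trei} follow by direct substitution. Partitioning $U = [U_1 \mid U_2]$ and reading off the first $r$ columns of $UR = P\Sigma X$ produces $U_1 R_{11} = P\Sigma X_{:,1:r}$; orthonormality of $U_1$ and $P$ together with the block-diagonal form of $\Sigma$ give $\sigma_{\min}(R_{11}) \geq \sigma_r \sigma_{\min}(X_{11})$, which closes the lower bound of \eqref{unu}. The upper bound of \eqref{doi} follows from Lemma \ref{lemma:R22bound}, and \eqref{trei} follows from Lemma \ref{lemma:kernel_condition}, with the stated constants $6.1$ and $50$ coming from $3 \cdot 2.02 \leq 6.1$ and $6 \cdot (2.02)^3 \leq 50$.

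The final inequality \eqref{four} is the main obstacle, since the coefficient $3$ in Lemma \ref{lemma:kernel_condition} already prevents a black-box derivation of the sharper $4.04 = 2 \cdot 2.02$. I would reopen the algebra of Lemma \ref{lemma:kernel_condition}: writing $R_{11} = Y X_{11}(I+E)$ with $Y = U_1^H P_1 \Sigma_1$ and $E = X_{11}^{-1} Y^{-1} U_1^H P_2 \Sigma_2 X_{21}$, the hypothesis $\delta > \sqrt{2}\cdot 1.01\, n\, \sigma_{r+1}/\sigma_r$ together with $r(n-r) \leq n^2/4$ forces $\|E\|_2 \leq 1/\sqrt{2}$, so the Neumann series for $(I+E)^{-1}$ converges geometrically. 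The crux is that under this hypothesis every factor of $\sigma_{r+1}/\sigma_r$ paired with $\sigma_{\min}^{-k}(X_{11})$ for $k \geq 2$ is small enough to be absorbed into a bounded additive constant, rather than leaving a polynomial-in-$1/\delta$ dressing on $\sigma_{\min}^{-1}(X_{11})$; this is what collapses the leading coefficient from $3$ down to $2$ and leaves only the additive $+1$. The delicate algebraic bookkeeping in this collapse is the main hurdle; the rest of the theorem is essentially packaging.
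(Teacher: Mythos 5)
Your treatment of the deterministic bounds (upper bound of \eqref{unu}, lower bound of \eqref{doi}) via singular-value interlacing matches the paper, and conditioning once on the event $\sigma_{\min}(X_{11})^{-1}\leq 2.02\sqrt{r(n-r)}/\delta$ from Corollary~\ref{lowbd} is exactly the right move for the three probabilistic bounds. Your direct derivation of $\sigma_{\min}(R_{11})\ge\sigma_r\sigma_{\min}(X_{11})$ from $U_1R_{11}=P\Sigma X_1$ is correct and is a clean self-contained replacement for the paper's citation of \cite[Theorem 5.2]{DeDuHo}. Bounds \eqref{doi} and \eqref{trei} via Lemmas~\ref{lemma:R22bound} and~\ref{lemma:kernel_condition} with the constant checks $3\cdot2.02\le6.1$ and $6\cdot 2.02^3\le 50$ are fine. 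One small omission: the paper first reduces $r>n/2$ to $r\le n/2$ via the CS-decomposition fact that the nontrivial singular values of $V[1:r,1:r]$ coincide with those of $V[r+1:n,r+1:n]$; Corollary~\ref{lowbd} rests on a pdf valid for $r<n/2$, so this reduction should be stated.

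The genuine gap is in \eqref{four}, and you are right that it is the crux. Two problems with your sketch. First, the factorization $R_{11}=YX_{11}(I+E)$ with $Y=U_1^HP_1\Sigma_1$ presupposes $Y$ is invertible, which is not automatic ($U_1^HP_1$ is an off-diagonal block of the unitary $U^HP$ and can be singular in general); under the gap hypothesis one can argue $\sigma_{\min}(YX_{11})\ge\sigma_{\min}(R_{11})-\sigma_{r+1}>0$, but you did not make that argument. Second, and more importantly, your claim that ``every factor of $\sigma_{r+1}/\sigma_r$ paired with $\sigma_{\min}^{-k}(X_{11})$ for $k\ge2$ is small enough to be absorbed'' is false as stated: the Lemma~\ref{lemma:kernel_condition} term $\frac{\sigma_{r+1}}{\sigma_r}\sigma_{\min}^{-3}(X_{11})$ scales like $r(n-r)/\delta^2$ under the hypothesis and does not reduce to a constant. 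What actually makes \eqref{four} work is a \emph{different} expansion: the paper writes $R_{11}^{-1}R_{12}=Y_1^+Y_2$ with $Y=\Sigma X$, computes $Y_1^HY_1=X_{11}^H\Sigma_1^2X_{11}+X_{21}^H\Sigma_2^2X_{21}$ and $Y_1^HY_2$, splits into $T_1+T_2$, and then factors $X_{11}^H$ and $X_{11}$ out of $(Y_1^HY_1)^{-1}$. This produces the \emph{squared} ratio $\bigl(\tfrac{\sigma_{r+1}}{\smin\sigma_r}\bigr)^2$ inside the Neumann-type bound $\|(I-A)^{-1}\|\le\frac1{1-\|A\|}$, and the hypothesis forces $\frac{\sigma_{r+1}}{\smin\sigma_r}\le\frac1{\sqrt2}$, giving $\|T_1\|_2\le\frac{2}{\smin}\le\frac{4.04}{\delta}\sqrt{r(n-r)}$ and $\|T_2\|_2\le 1$. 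Reopening Lemma~\ref{lemma:kernel_condition} as you propose does not directly produce the squared ratio, so the ``bookkeeping'' you defer is where the argument would actually break down without a new idea.
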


\begin{remark} The factor $\sqrt{r(n-r)}$ in the equations
  \eqref{unu}, \eqref{doi}, matches the best deterministic algorithms
  up to a constant. When the gap is large enough so that
  $\frac{\sigma_{r+1}}{\sigma_{r}}$ is $O(1/n)$ with some small
  constant, so that the additional hypothesis applies, \eqref{four} also matches the best deterministic algorithms up to a constant.  Even when the gap is small, \eqref{trei} shows the factorization is strong with high probability.
 \end{remark}

\begin{proof} To prove this theorem, we will rely on Lemma
  \ref{lemma:R22bound} and Lemma \ref{lemma:kernel_condition}. For the
  sake of argument flow, we have moved the proofs of these
  lemmas to the end of the section. 
  
There are two cases of the problem, $r \leq n/2$ and $r> n/2$. 
Let $V$ be the Haar matrix used by the algorithm. 
From \cite[Theorem 2.4-1]{GVL12}, 
the singular values of $V[1:r, 1:r]$ when $r>n/2$ consist 
of $(2r-n)$ $1$'s and the singular values of $V[(r+1):n ,(r+1):n]$. 
Thus, the case $r>n/2$ reduces to the case $r \leq n/2$.


The upper bound in inequality \eqref{unu} and the lower bound in inequality \eqref{doi} follow from the Cauchy interlace theorem (see \cite[Theorem 7.3.9]{horn_johnson1}). 
The lower bound in inequality \eqref{unu} follows immediately from \cite[Theorem 5.2]{DeDuHo} and Corollary \ref{lowbd}.
We provide proofs of the upper bounds of inequalities \eqref{doi} and \eqref{trei}, below. 

Theorem 5.2 from \cite{DeDuHo} states that 
$$\sigma_{\max}(R_{22}) \leq 3\sigma_{r+1} \cdot \frac{\smin^{-4} \cdot \left ( \frac{\sigma_1}{\sigma_r} \right)^3}{1 - \frac{\sigma_{r+1}^2}{\smin^2 \sigma_r^2}}~;$$
provided that $\sigma_{r+1} < \sigma_r \smin$.
This upper bound is lax, and we tighten it here. Note that Lemma
  \ref{lemma:R22bound} establishes that
\[
\sigma_{\max}(R_{22}) \leq \sigma_{r+1} \sigma_{\min}^{-1}(X_{11}) \, ,
\]
	where $X$ is Haar distributed and $X_{11}$ is its upper-left
        $r \times r$ submatrix. 
We conclude $\sigma_{\max}(R_{22}) \leq 2.02 \frac{\sqrt{r(n-r)}}{\delta} \sigma_{r+1}$ with probability $1-\delta$, by using Corollary \ref{lowbd}.  This completes the proof of (2).

To prove \eqref{trei}, we use Lemma \ref{lemma:kernel_condition},
which establishes that
\[
\|R_{11}^{-1} R_{12}\|_2 \leq 3\sigma^{-1}_{\min}(X_{11}) + 6 \frac{\sigma_{r+1}}{\sigma_r}\sigma^{-3}_{\min}(X_{11}) \, ,
\]
where again $X$ is Haar distributed.  We conclude 
\[
\|R_{11}^{-1} R_{12}\|_2 \leq 3s_{r,n}^{-1} + 6\frac{\sigma_{r+1}}{\sigma_{r}} s_{r,n}^{-3} \, ,
\]
and apply Corollary \ref{lowbd} to get the result \eqref{trei}.

\vspace*{.3cm}
It remains to show the strengthened bound \eqref{four} on $\|R_{11}^{-1}R_{12}\|_2$ when $\delta > \sqrt{2}\cdot 1.01 \cdot n \cdot \frac{\sigma_{r+1}}{\sigma_r}$.  We use the following notation. Let $A = P \Sigma Q^{H} = P \cdot \diag(\Sigma_1, \Sigma_2) \cdot Q^{H}$ be the singular value decomposition of $A$, where $\Sigma_1 = \diag(\sigma_1, \ldots, \sigma_r)$ and $\Sigma_2 = \diag(\sigma_{r+1}, \ldots, \sigma_n)$. Let $V^H$ be the random unitary matrix in \textbf{RURV}, so that $A=URV$. Then $X = Q^{H}V^{H}$ has the same distribution as $V^{H}$, by virtue of the fact that $V$'s distribution is uniform over unitary matrices. 

Write 
\[
X = \left [ \begin{array}{cc} X_{11} & X_{12} \\ X_{21} & X_{22} \end{array} \right ]~,
\]
where $X_{11}$ is $r \times r$ and $X_{22}$ is $(n-r) \times (n-r)$. Then 
\[
U^H P \cdot \Sigma X = R~.
\]

Denote $\Sigma \cdot X = [Y_1, Y_2]$ where $Y_1$ is an $n \times r$ matrix and $Y_2$ is $n\times (n-r)$. Since $U^{H}P$ is unitary, it is not hard to check that 
\[
R_{11}^{-1} R_{12} = Y_1^{+} Y_2~,
\]
where $Y_1^{+}$ is the pseudoinverse of $Y_1$, i.e. $Y_1^{+} = (Y_1^H
Y_1)^{-1} Y_1^H$. 

There are two crucial facts that we need  to check here: one is that $R_{11}^{-1}$ actually exists, and the other is that the pseudoinverse (as defined above) is well-defined, that is, that $Y_1$ is full rank. We start with the second one of these facts.

The matrix $Y_1$ is full-rank with probability $1$. This is true due to two reasons: the first one is that the first $r$ singular values of $A$, ordered decreasingly on the diagonal of $\Sigma$, are strictly positive. The second one is that $X$ is Haar distributed, and hence Lemma \ref{low_bd} shows that $X_{11}$ is invertible with probability 1.  It follows that $Y_1^{+}$ is well-defined. 

To argue that $R_{11}^{-1}$ exists, note that $Y_1 = P^HU[R_{11};0]$ so rank($Y_1$)$=$rank($R_{11}$) as $P^HU$ is unitary.  Since $Y_1$ is full-rank, it follows that $R_{11}$ is invertible.

Having made sure that the equation relating $R_{11}^{-1} R_{12}$ and $Y_1$ is correct, we proceed to study the right hand side. From the definition of $Y$, we obtain that
\[
Y_1^H Y_1 = X_{11}^H \Sigma_{1}^2 X_{11} + X_{21}^H\Sigma_2^2 X_{21}~~, ~~~~\mbox{and}~~~~ Y_1^H Y_2 = X_{11}^H \Sigma_{1}^2 X_{12} + X_{21}^H\Sigma_2^2 X_{22}~~.
\]
Hence
\[
R_{11}^{-1} R_{12} = \left (X_{11}^{H} \Sigma_1^2 X_{11} + X_{21}^{H} \Sigma_2^2 X_{21} \right)^{-1} \left ( X_{11}^{H} \Sigma_1^2 X_{12} + X_{21}^{H} \Sigma_2^2 X_{22} \right)~.
\]
We split this into two terms. Let $T_1$ be defined as follows:
\[
T_1 := \left (X_{11}^{H} \Sigma_1^2 X_{11} + X_{21}^H \Sigma^2_2 X_{21} \right )^{-1} X_{11}^{H} \Sigma_1^2 X_{12} = X_{11}^{-1} \left ( \Sigma_1^2 + (X_{21} X_{11}^{-1})^{H} \Sigma_2^2 (X_{21} X_{11}^{-1}) \right )^{-1} \Sigma_1^2 X_{12}~,
\]
where the last equality reflects the factoring out of $X_{11}^H$ to the left and of $X_{11}$ to the right inside the first parenthesis, followed by cancellation. 
Since $X_{12}$ is a submatrix of a unitary matrix, $||X_{12}|| \leq 1$, and thus
\[
||T_1||_2 \leq ||X_{11}^{-1}||_2 \cdot  || \left (I_r + \Sigma_1^{-2} (X_{21} X_{11}^{-1})^{H} \Sigma_2^2 (X_{21} X_{11}^{-1}) \right )^{-1}||_2 \leq \frac{1}{\smin} \cdot \frac{1}{1 - \frac{\sigma_{r+1}^2}{ \smin^2 \sigma_r^2}}~,
\]
where the last inequality follows from the fact that for a matrix $A$ with $||A||<1$, $||(I-A)^{-1}|| \leq \frac{1}{1 - ||A||}$. The right hand side has been obtained by applying norm inequalities and using the fact that $||X_{11}^{-1}|| = \smin^{-1}$. The assumption on $\delta$ can be rearranged to $\frac{\delta}{\sqrt{2}\cdot 1.01\cdot n} > \frac{\sigma_{r+1}}{\sigma_r}$.  Combine this with $\frac{1}{\smin} < \frac{2.02}{\delta} \cdot \sqrt{r (n-r)}$ to get 
\begin{eqnarray} \label{sqrt_bound}
\frac{\sigma_{r+1}}{\smin \sigma_{r}} < \frac{\delta}{\sqrt{2}\cdot 1.01 \cdot n} \frac{2.02}{\delta} \cdot \sqrt{r (n-r)} = \sqrt{2} \frac{\sqrt{r(n-r)}}{n} \leq \frac{1}{\sqrt{2}}
\end{eqnarray}
We conclude that 
\begin{eqnarray} \label{r-bound}
\|T_1\|_2 \leq \frac{4.04}{\delta} \cdot \sqrt{r (n-r)}
\end{eqnarray}
We now apply similar reasoning to the second (remaining) term
\[
T_2:= \left (X_{11}^{H} \Sigma_1^2 X_{11} + X_{21}^H \Sigma^2_2 X_{21} \right )^{-1} X_{21}^H \Sigma_2^2 X_{22}~;
\]
to yield that 
\[
||T_2||_2 \leq ||X_{11}^{-1}||_2^2 \cdot ||\left ( I_r + \Sigma_1^{-2} (X_{21} X_{11}^{-1})^{H} \Sigma_2^2 (X_{21} X_{11}^{-1})\right)^{-1} ||_2 \cdot 
||\Sigma_1^{-2} ||_2 \cdot || \Sigma_2^{2}||_2 \leq \frac{\sigma_{r+1}^2}{\smin^2 \sigma_r^2} \cdot \frac{1}{1 - \frac{\sigma_{r+1}^2}{ \smin^2 \sigma_r^2}}~,
\]
because $||X_{21}||$ and $||X_{22}|| \leq 1$.  Finally, note that \eqref{sqrt_bound} together with the fact that the
function $x^2/(1-x^2)$ is increasing on $(0, \infty)$ give $\|T_2\|_2 \leq 1$. Combining this with \eqref{r-bound}, the conclusion follows. 

\end{proof}

We return now for the proofs of the two lemmas.
\begin{proof}[Proof of Lemma \ref{lemma:R22bound}]
Let $Y = \Sigma X = P^H U R$.  We begin by introducing a few block notations naturally suggested by the singular value gap, separating the first $r$ coordinates from the final $n-r$ coordinates:

\[
X = \left [ \begin{array}{cc} X_{1} & X_{2}  \end{array} \right ]~, Y = \left [ \begin{array}{cc} Y_{1} & Y_{2} \end{array} \right ] ~, \Sigma = \left [ \begin{array}{cc} \Sigma_{1} & 0 \\ 0 & \Sigma_2 \end{array} \right ]~,
\]

Note that $R$ is the upper-triangular factor resulting
  from QR-factorization of $Y$.  From this and understanding that the
QR-factorization records the Gram-Schmidt orthogonalization process,
we see that $R_{22}$ has the same singular values as
${\text{Proj}}_{{Y_{1}}_\perp} Y_{2}$, where ${Y_{1}}_{\perp}$ is any
matrix whose columns are a basis for the orthogonal complement of
$Y_1$. In particular,
$\sigma_{\max}(R_{22}) = \|{\text{Proj}}_{{Y_{1}}_\perp} Y_{2}\|_2$.  

It is also clear from $Y = \Sigma X$ that if $\Sigma$ contains $0$
elements on the diagonal, then the corresponding rows of
${\text{Proj}}_{{Y_{1}}_\perp} Y_{2}$ are $0$.  Therefore we make the assumption that there are no $0$ singular values.

We next relate $Y_{1}$ to $Y_{2}$ in order to analyze this matrix.
Using a common matrix identity in first equality and the orthogonality
of $X$ in the third equality, we see that 
\[
\text{im}({Y_{1}})_\perp = \ker(Y_{1}^H)= \ker(X_{1}^H \Sigma)= \text{im}(\Sigma^{-1} X_{2})
\]
Thus, we seek to bound $\|{\text{Proj}}_{ \Sigma^{-1} X_{2}} \Sigma
X_2\|_2$.  

We will present a coordinate-based bound. First, it is true for any invertible $U$ that ${\text{Proj}}_Y X = {\text{Proj}}_{YU} X$.  We set $U$ to be the $(n-r) \times (n-r)$ orthogonal matrix $L$ of right-singular vectors of
${\text{Proj}}_{ \Sigma^{-1} X_{2}} \Sigma X_2$. Thus,
\[
{\text{Proj}}_{ \Sigma^{-1} X_{2}} \Sigma X_2 = {\text{Proj}}_{
  \Sigma^{-1} X_{2}L} \Sigma X_2 \, ,
\]
and by the definition of $L$,
\[
\|{\text{Proj}}_{ \Sigma^{-1} X_{2}} \Sigma X_2\|_2 = \|{\text{Proj}}_{ \Sigma^{-1} X_{2}L} \Sigma X_2L[1:n-r,1]\|_2~.
\]

 To keep notation as simple as possible, we denote 
 $Z = X_2 L$ and partition $Z = (z, Z')$.  Selecting any column $z'$
 from $Z'$, observe that $(\Sigma z)^H (\Sigma^{-1} z') = z^H z' = 0$.
 Thus we can compute the needed projection by performing Gram-Schmidt
 on $\Sigma^{-1} z$ with respect to $\Sigma^{-1}Z'$, which we observe to produce $\Sigma^{-1}z-{\text{Proj}}_{ \Sigma^{-1} Z'}\Sigma^{-1} z$.
 \[
\|{\text{Proj}}_{ \Sigma^{-1} Z} \Sigma z \|_2 = \frac{(\Sigma z)^H\left(\Sigma^{-1}z-{\text{Proj}}_{ \Sigma^{-1} Z'}\Sigma^{-1} z\right)}{\|\Sigma^{-1}z-{\text{Proj}}_{ \Sigma^{-1} Z'}\Sigma^{-1} z\|_2} = \|\Sigma^{-1}z-{\text{Proj}}_{ \Sigma^{-1} Z'}\Sigma^{-1} z\|_2^{-1}
\]

We are looking for an upper bound for the latter, so it suffices to
bound the quantity $\|\Sigma^{-1}z-{\text{Proj}}_{ \Sigma^{-1}
  Z'}\Sigma^{-1} z\|_2$ away from $0$. Note that if we can restrict
the problem to the last $n-r$ coordinates and get a non-trivial lower
bound, we have achieved our goal.

For simplicity, we denote the last $(n-r)$ coordinates of $Z$ by $B$, of
$Z'$ by $B'$, and of $z$ by $b$.  The quantity
$\|\Sigma_2^{-1}b-{\text{Proj}}_{ \Sigma_2^{-1} B'}\Sigma_2^{-1}
b\|_2$ is a least squares error, specifically,
\[
\|\Sigma_2^{-1}b-{\text{Proj}}_{ \Sigma_2^{-1} B'}\Sigma_2^{-1}
b\|_2^{-1} = \left(\min\limits_{x \in
    \mathbb{R}^{n-r-1}}\|\Sigma_2^{-1} B' x - \Sigma_2^{-1} b \|_2
\right)^{-1} ~,
\]
and on the other hand, trivially, 
\[
\left(\min\limits_{x \in
    \mathbb{R}^{n-r-1}}\|\Sigma_2^{-1} B' x - \Sigma_2^{-1} b \|_2
\right)^{-1} \leq \sigma_{r+1} \left(\min\limits_{x \in \mathbb{R}^{n-r-1}}\|B' x - b \|_2 \right) ^{-1}~. 
\]

To complete the proof, we bound the quantity $\min\limits_{x \in \mathbb{R}^{n-r-1}}\|B' x - b \|_2 $ away from $0$ in terms of the smallest singular value of the lower right $(n-r) \times (n-r)$ submatrix of the original random matrix $X$.  Indeed,
\[
\min\limits_{x \in \mathbb{R}^{n-r-1}}\|B' x - b \|_2 = \min\limits_{x \in \mathbb{R}^{n-r-1}}\|B \left( \begin{array}{c} 1 \\ x \\ \end{array} \right) \|_2 \geq \sigma_{\min}(B)
\]

However, also recall that $B$ is exactly the lower $(n-r) \times
(n-r)$ block of $X_2 L$.  As $L$ is unitary and applied on the right,
we see $\sigma_{\min}(B) = \sigma_{\min}(X_{22})$.  Finally, it is not
difficult to check that the orthogonality of $X$ implies that $\sigma_{\min}(X_{22}) = \sigma_{\min}(X_{11})$.  Therefore, in total, we have shown

\[
\|R_{22}\|_2 \leq \sigma_{r+1} \sigma_{\min} (X_{11})^{-1} \, .
\]

\end{proof}

\begin{proof}[Proof of Lemma \ref{lemma:kernel_condition}]
Let $R'$ be the upper-triangular result of $\textbf{QR}(A
\tilde{V}^H)$, where $\tilde{V}^H$ is formed by swapping column $i\leq
r$ of $V^H$ with column $j+r$. This is equivalent to saying that $R'$
is the upper-triangular result of $\textbf{QR}(\Sigma \tilde{X})$, where $\tilde{X}$ swaps column $i$ with column $j+r$.  This point of view will be more helpful in the proof.  From Lemma 3.1 of \cite{GE96}, 
\begin{equation} \label{eq:gu}
\left | (R_{11}^{-1} R_{12})[i,j]\right| \leq \frac{\left|\text{det}(R_{11}')\right|}{\left|\text{det}(R_{11})\right|}~.
\end{equation}  
We are particularly interested in the case when $i = r$ and $j = n-r$,
as will become apparent below.

Since our bound is going to use the coordinate-based inequality
\eqref{eq:gu}, much as in Lemma \ref{lemma:R22bound}, it is again
useful to change coordinates to an optimal choice.  One can check that
$R_{11}^{-1} R_{12} = (\Sigma X_1)^+ \Sigma X_2$, since
(\ref{eq:gu}) can be viewed as a generalization of Cramer's Rule.
Therefore for orthogonal matrices of appropriate sizes $\bar{U}$, $\bar{V}$,
\[
\|R_{11}^{-1} R_{12}\|_2 = \|\bar{U}^H R_{11}^{-1} R_{12} \bar{V} \|_2 = \|(\Sigma X_1 \bar{U})^+(\Sigma X_2 \bar{V})\|_2~.
\]
Choosing now $\bar{U}$ and $\bar{V}$ to be given by
  the SVD of $R_{11}^{-1} R_{12}$ in appropriate column order, we can
  ensure that the norm of $R_{11}^{-1} R_{12}$ is the lower right
  entry of $\bar{U} R_{11}^{-1} R_{12}\bar{V}^H =
  (R_{11}\bar{U}^H)^{-1} R_{12}\bar{V}^H$. 

Suppose now that we bounded the
entries of $R_{11}^{-1} R_{12}$ in terms of a
function of the matrix $X$ that is invariant under
right multiplication of $X$ by a block-orthogonal matrix $\left
  ( \begin{array}{cc} \bar{U}^H & 0 \\ 0 & \bar{V} \end{array}
\right)$.  Then the bound on the bottom right entry $(r,n-r)$ of
$R_{11}^{-1} R_{12}$ would apply to the operator norm.
Hence, using Equation \eqref{eq:gu}, our the task is to bound 
\[
R_{11}^{-1} R_{12}[r,n-r] \leq \frac{\left|\text{det}(R_{11}')\right|}{\left|\text{det}(R_{11})\right|} = \frac{\left|R'[r,r]\right|}{\left|R[r,r]\right|}.
\]
where $R'$ has resulted from swapping column $r$ in $X$ with column
$n$ in $X$.

\vspace*{.3cm}
With the preliminaries over, we introduce notation to assist in
  the proof. Using Matlab notation, let $X_{11} = X[1:r,1:r]$, $X_1' = X[1:n, 1:r-1]$, $X_{11}' =
X[1:r, 1:r-1]$, $X_{21}' = X[r+1:n,1:r-1]$;  $x_r$ will denote the $r$-th
column of $X$, $x_n$ will denote the last column of $X_2$ (also of
$X$). We make use of one projection extensively, and therefore denote it by letter $\Pi := \text{Proj}_{(\Sigma
  X_1')_{\perp}} \Sigma (\cdot)$. Finally, projection onto the first $r$ coordinates is used, and we denote this by $\zeta$.

\vspace*{.3cm}
We are ready for the main part of the
lemma's proof.  First upper bound $|R'[r,r]| = \|\Pi
  \Sigma x_n\|_2$.  Let $\left(\begin{array}{c} w \\
    0 \end{array}\right)$ be the maximal right singular unit-vector of
the operator $\Pi \Sigma \zeta$.  We have

\[
|R'[r,r]| = \| \Pi \Sigma x_n \|_2 \leq \|\Pi \Sigma \left ( \begin{array}{c} w \\ 0 \end{array} \right) \|_2 + \max\limits_{\|t\|=1}\|\Pi \Sigma \left(\begin{array}{c} 0 \\ t \end{array} \right) \|_2 \leq \|\Pi \Sigma \left(\begin{array}{c} w \\ 0 \end{array} \right) \|_2 + \sigma_{r+1}
\]
To analyze this further, decompose $w=c_1v+c_2 u$ where $v \in
{X'_{11}}_{\perp}$ and $u \in \text{im}(X_{11}')$ are unit-vectors
orthogonal to each other.  We will have to control this $u$ term later
in the proof.  To this end, note that we have the following least
squares interpretation:
\begin{equation*}
\|\Pi \Sigma \left( \begin{array}{c} u \\ 0 \end{array}\right) \|_2 =
\min\limits_{y\in\mathbb{R}^{r-1}}\| \Sigma X_1' y-\Sigma
\left(\begin{array}{c} u \\ 0 \end{array} \right) \|_2 ~,
\end{equation*}
and by making $y = X_{11}'^{+}u$ and taking advantage of the fact that
$u$ is in the image of $X_{11}'$ so that $X_{11}'X_{11}'^+ u = u$, we
obtain
\begin{equation}\label{uterm}
\|\Pi \Sigma \left( \begin{array}{c} u \\ 0 \end{array}\right) \|_2 \leq \|\Sigma_2 X_{21}' X_{11}'^+ u\|_2 \leq \sigma_{r+1} \sigma_{\min}^{-1}(X_{11})~.
\end{equation}
In the second inequality above we have used that
$\sigma_{\min}(X_{11}') \geq \sigma_{\min}(X_{11})$.

Our main upper bound on $|R'[r,r]|$ is then
 \begin{equation}\label{mainupper}
|R'[r,r]| \leq  \|\Pi \Sigma \left(\begin{array}{c} v \\ 0 \end{array} \right) \|_2 + \sigma_{r+1} + \sigma_{r+1} \sigma_{\min}^{-1}(X_{11})
 \end{equation}
\vspace*{.3cm}
Due to the presence of $ \|\Pi \Sigma
  \left(\begin{array}{c} v \\ 0 \end{array} \right) \|_2$ in the bound
  above, we will need to consider two different cases depending on
  whether this term is small or large; as such, we will need to
  develop two different lower bounds on $|R[r,r]|$. We now build the the first lower bound on $|R[r,r]| = \|\Pi \Sigma x_r\|_2$.
\begin{align*}
|R[r,r]| = \|\Pi \Sigma x_r\|_2 = \min\limits_{y \in \mathbb{R}^{r-1}} \|\Sigma X_1' y - \Sigma x_r\|_2 &\geq\min\limits_{y_1 \in \mathbb{R}^{r-1}} \|\Sigma_1 X_{11}' y_1 - \Sigma_1 (x_r)_1\|_2 \\  
 &= \min\limits_{y_1 \in \mathbb{R}^{r-1}} \|\Sigma_1 X_{11} \left ( \begin{array}{c} y_1 \\ 1 \end{array} \right ) \|_2 \geq \sigma_{r}\sigma_{\min}(X_{11}) \, ,
\end{align*}
 where $(x_r)_1 = x_r[:r]$. This proves the first lower bound we need, 
\begin{equation}
|R[r,r]| \geq \sigma_{r}\sigma_{\min}(X_{11}) \label{boundoption2}
\end{equation}
 
\vspace*{.3cm} 
The second lower bound on $|R[r,r]|$ is similar in spirit, and is introduced to take care of the case when $\|\Pi \Sigma \left( \begin{array}{c}v \\ 0 \end{array} \right)\|_2$ is large.  Recall $v$ is the unique direction orthogonal to the the columns of $X_{11}'$, and additionally let $c$ be the magnitude of the projection of $x_r$ onto $\left ( \begin{array}{c} v \\ 0 \end{array} \right)$.  Again interpreting projection through least squares, 
\[
c = \min\limits_{y \in \mathbb{R}^{r-1}} \|X_{11} \left ( \begin{array}{c} y \\ 1 \end{array} \right ) \|_2 \geq \sigma_{\min}(X_{11})
\]
Now use the reverse triangle inequality,
\begin{equation}
|R[r,r]| = \|\Pi \Sigma x_r\|_2 \geq  \|\Pi \zeta\Sigma x_r\|_2 - \|\Pi (I-\zeta)\Sigma x_r\|_2 \label{triangle} \, .
\end{equation}
We need to lower bound the term  $\|\Pi \zeta\Sigma x_r\|_2
  $ and to upper bound the term $ \|\Pi (I-\zeta)\Sigma x_r\|_2$.
Decompose $\zeta x_r = \pm cv + u'$ with $u'$ in the image of $X_{11}'$.
Use the same algebra as in \eqref{uterm} to control the $u'$ term,
establishing that
\[
\|\Pi \zeta\Sigma x_r\|_2 \geq c \|\Pi \Sigma \left(\begin{array}{c}v \\ 0 \end{array}\right) \|_2 - \|\Pi \Sigma \left(\begin{array}{c} u' \\ 0 \end{array}\right) \|_2 \geq c \|\Pi \Sigma \left(\begin{array}{c}v \\ 0 \end{array}\right) \|_2  - \sigma_{r+1} \sigma_{\min}^{-1}(X_{11})~.
\]
To upper bound the second term of \eqref{triangle}, note that
\[
\|(I-\zeta)\Sigma x_r\|_2 \leq \|(I-\zeta)\Sigma\|_2 = \sigma_{r+1}~.
\]
Combining all of these observations and using the lower bound on $c$,
the second lower bound on $|R[r,r]|$ is

\begin{equation}
|R[r,r]| \geq \sigma_{\min}(X_{11})\| \Pi \Sigma\left(\begin{array}{c}v \\ 0 \end{array}\right) \|_2 -2 \sigma_{r+1} \sigma_{\min}^{-1}(X_{11})\label{boundoption1}
\end{equation}

\vspace*{.3cm}
To conclude the proof, we present two cases of the ratio between $|R'[r,r]|$, bounded in \eqref{mainupper}, and $|R[r,r]|$, bounded in \eqref{boundoption2} and \eqref{boundoption1}.  For the first case, we make the assumption $ \|\Pi \Sigma \left(\begin{array}{c} v \\ 0 \end{array} \right)\|_2 \geq 4 \sigma_{r+1} \sigma^{-2}_{\min}(X_{11})$.  In this situation, \eqref{boundoption1} is superior.  We get
\begin{align*}
\frac{|R'[r,r]|}{|R[r,r]|} &\leq \frac{\|\Pi \Sigma \left(\begin{array}{c} v \\ 0 \end{array} \right) \|_2 + \sigma_{r+1} + \sigma_{r+1} \sigma_{\min}^{-1}(X_{11})}{\sigma_{\min}(X_{11})\| \Pi \Sigma\left(\begin{array}{c}v \\ 0 \end{array}\right) \|_2 -2 \sigma_{r+1} \sigma_{\min}^{-1}(X_{11})} \\ 
& \leq \frac{1.5\|\Pi \Sigma \left(\begin{array}{c} v \\ 0 \end{array} \right) \|_2 \sigma_{\min}^{-1}(X_{11})}{\| \Pi \Sigma\left(\begin{array}{c}v \\ 0 \end{array}\right) \|_2 -2 \sigma_{r+1} \sigma_{\min}^{-2}(X_{11})}  \\
&\leq \frac{1.5\|\Pi \Sigma \left(\begin{array}{c} v \\ 0 \end{array} \right) \|_2 \sigma_{\min}^{-1}(X_{11})}{.5 \| \Pi \Sigma\left(\begin{array}{c}v \\ 0 \end{array}\right) \|_2}  = 3 \sigma_{\min}^{-1}(X_{11})
\end{align*}
The second case is $\|\Pi \Sigma \left(\begin{array}{c} v \\ 0 \end{array} \right)\|_2 < 4 \sigma_{r+1} \sigma^{-2}_{\min}(X_{11})$.  In this situation we must use \eqref{boundoption2},  
\begin{align}
\frac{|R'[r,r]|}{|R[r,r]|} &\leq \frac{\|\Pi \Sigma \left(\begin{array}{c} v \\ 0 \end{array} \right) \|_2 + \sigma_{r+1} + \sigma_{r+1} \sigma_{\min}^{-1}(X_{11})}{\sigma_{r}\sigma_{\min}(X_{11})} \\
&\leq 6\frac{\sigma_{r+1}}{\sigma_r} \sigma_{\min}^{-3}(X_{11})
\end{align}

The bound we have given depends on $\sigma_{\min}(X_{11})$, therefore satisfying the unitary invariance property we required.  To make the statement simple we have added the two bounds together in the lemma statement.

\end{proof}

\subsection{Stability of {\bf RURV}} \label{an_grurv_stability}

The following appeared in \cite{DeDuHo} as Lemma 5.4.  We include a short proof for completeness, and also because it is used to prove Theorem \ref{thm_grurv}.

\begin{theorem} \label{stable_rurv} {\bf RURV} is backward stable. \end{theorem}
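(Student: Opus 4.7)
The plan is to decompose \textbf{RURV} into its three computational steps (generating/QR-ing the random matrix $B$, multiplying $A$ by $V^H$, and QR-factorizing the result) and to combine the standard backward-stability bounds for each into a single backward error for the factorization $A = URV$. Since \textbf{RURV} outputs only $V$, $U$, $R$ (not $B$), the stability claim we must establish is that the computed $\tilde U, \tilde R, \tilde V$ satisfy $\tilde U \tilde R \tilde V = A + E$ with $\|E\| = O(n \varepsilon)\|A\|$, and that $\tilde U, \tilde V$ are each orthogonal to within $O(\varepsilon)$; this is what it means for the factorization to be backward stable in the usual sense.

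First I would record the three building-block bounds. By the standard backward-error analysis of Householder \textbf{QR} (e.g.\ the references already used throughout the paper), the computed $\tilde V$ from step 2 satisfies $\|\tilde V^H \tilde V - I\| \leq O(n \varepsilon)$, i.e.\ $\tilde V$ is close to an exactly unitary matrix. For the matrix--matrix product in step 3, the standard floating-point analysis gives $\mathrm{fl}(A \tilde V^H) = A \tilde V^H + E_1$ with $\|E_1\| \leq O(n \varepsilon) \|A\|\,\|\tilde V^H\| = O(n \varepsilon)\|A\|$, since $\tilde V$ is nearly unitary. Applying the backward-stable \textbf{QR} to $\hat A := \mathrm{fl}(A \tilde V^H)$ yields computed $\tilde U, \tilde R$ with $\|\tilde U^H \tilde U - I\| \leq O(n\varepsilon)$ and $\tilde U \tilde R = \hat A + E_2$ where $\|E_2\| \leq O(n \varepsilon)\|\hat A\| = O(n \varepsilon)\|A\|$.

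Next I would compose these estimates. Multiplying out,
\[
\tilde U \tilde R \tilde V \;=\; (\hat A + E_2)\tilde V \;=\; A \tilde V^H \tilde V + (E_1 + E_2)\tilde V \;=\; A + A(\tilde V^H \tilde V - I) + (E_1 + E_2)\tilde V.
\]
Taking norms and using $\|\tilde V\| \leq 1 + O(\varepsilon)$, $\|\tilde V^H \tilde V - I\| = O(n\varepsilon)$, and the bounds on $E_1, E_2$, one concludes $\tilde U \tilde R \tilde V = A + E$ with $\|E\| \leq O(n\varepsilon)\|A\|$. Combined with the near-orthogonality of $\tilde U$ and $\tilde V$ already noted, this is exactly the backward-stability statement.

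No real obstacle arises here: the argument is a straightforward composition of black-box backward-error guarantees. The one thing worth flagging explicitly is that the randomness in $\tilde V$ plays no role in stability---the bounds hold deterministically for every realization of $B$---so stability and the rank-revealing probability guarantees of Theorem~\ref{thm_rurv} are logically independent. I would therefore keep the proof very short, stating the three cited backward-error facts, performing the one-line composition above, and remarking on the determinism with respect to the random draw.
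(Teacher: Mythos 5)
Your proof is correct and follows essentially the same route as the paper's: decompose \textbf{RURV} into the QR-of-$B$, the multiply by $\tilde V^H$, and the final QR, invoke the standard backward-stability facts for QR and for multiplication by a (nearly) unitary matrix, and compose them in one line. The only difference is presentational---the paper phrases the result with exactly orthogonal perturbed matrices $(U+dU)$, $(V+dV)$ and folds all errors into a single $dA$ via $(U+dU)R=(A+dA)(V+dV)^H$, whereas you keep the computed nearly-orthogonal $\tilde U,\tilde V$ and absorb their departure from orthogonality into the residual $E$; these are equivalent formulations of the same backward-error bound.
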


\begin{proof} We need two facts: that {\bf QR} is backward stable
  (e.g., implemented via Householder reflectors) and that, while
  multiplication by a square matrix is not, in general, backward
  stable, multiplication by a square unitary matrix is (this is a
  simple exercise appearing in many Numerical Linear Algebra books, which we leave for the reader). 

We input a matrix $A$ and output two matrices, $U$ and $R$, such that (in the absence of floating point error) we should have $UR = A V^{H}$, for some $V$ unitary, with $U$ unitary and $R$ upper triangular. For backward stability, we would like to show that in practice the outputs $U$ and $R$ satisfy $(U+dU)R = (A+dA)(V+dV)^{H}$, with $V+dV$ unitary, $U + dU$ unitary, and $||dA||/||A|| = O(\epsilon_{mach})$.  We know $||dV||$ and $||dU||$ are $O(\epsilon)$ because \textbf{QR} is stable.

We start with the fact that matrix multiplication by $(V+dV)^{H}$ is backward stable; recalling the definition $A V^{H}=:\hat{A}$ with round-off, then

\[
\hat{A} = (A+(dA)_1) (V+dV)^{H}~;
\]
with $||(dA)_1||/||A|| = O(\epsilon_{mach})$; and since {\bf QR} is also stable, the output $[U,R]$ will have the property that 
\[
(U+dU)R = \hat{A} + (dA)_2~,
\]
with $||(dA)_2||/||\hat{A}|| = O(\epsilon_{mach})$. Combining these,

\[
(U+dU)R = A (V+dV)^{H} + (dA)_1 (V+dV)^{H} + (dA)_2 = (A + (dA)_1 + (dA)_2 (V+dV)) (V+dV)^{H}~.
\]
As $(V+dV)^{H}$ is orthogonal, this means that $||(dA)_2(V+dV)||/||A|| = O(\epsilon_{mach})$, and we conclude that 
\[
(U+dU)R = (A+dA) (V+dV)^{H}~,
\]
where $dA = (dA)_1+ (dA)_2(V+dV)$ has the property that $||dA||/||A||=O(\epsilon_{mach})$.
\end{proof}

\section{Analysis of {\bf GRURV}} \label{an_grurv}

In this short section we prove that, given a matrix $M_k = A_1^{m_1}
\cdot A_2^{m_2} \cdot \ldots A_k^{m_k}$, where $m_1, \ldots, m_k \in
\{-1,1\}$, and such that only the matrices $A_i$ may be available, {\bf GRURV} can be applied to get
the same rank-revealing factorization we would obtain in the case of
applying {\bf RURV} to the explicitly formed product $M_k$.  

\begin{lemma}\label{lem:grurv}
\textbf{GRURV} (\emph{Generalized Randomized URV}) computes the
rank-revealing decomposition $M_k = U_{current} R_1^{m_1} \ldots R_k^{m_k} V$. 
\end{lemma}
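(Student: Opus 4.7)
The plan is to prove this by establishing an invariant at each iteration of the loop and then telescoping. Write $U^{(i)}$ for the value of $U_{current}$ immediately after the loop processes index $i$ (and $U^{(k)}$ for its value just after the initial \textbf{RURV}/\textbf{RULV} step). The key invariant I will verify is
\[
A_i^{m_i} = U^{(i)} R_i^{m_i} (U^{(i+1)})^H \qquad (1 \le i \le k-1),
\]
with the boundary equation $A_k^{m_k} = U^{(k)} R_k^{m_k} V$.

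First I would handle the base case $i=k$ in both sign regimes. If $m_k = 1$, the call $[U,R_k,V] = \textbf{RURV}(A_k)$ directly yields $A_k = UR_kV = U^{(k)} R_k^{m_k} V$. If $m_k = -1$, the call $[U,L_k,V] = \textbf{RULV}(A_k^H)$ gives $A_k^H = UL_kV$, so taking Hermitian transposes and using $R_k = L_k^H$ produces $A_k = V^H R_k U^H$, hence $A_k^{-1} = U R_k^{-1} V = U^{(k)} R_k^{m_k} V$. Either way, the boundary equation holds.

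Next I would verify the loop invariant. When $m_i = 1$, the step computes $[U,R_i] = \textbf{QR}(A_i \cdot U^{(i+1)})$, so $A_i U^{(i+1)} = U R_i$ with $U^{(i)} := U$, giving $A_i = U^{(i)} R_i (U^{(i+1)})^H$, which is the claim since $m_i = 1$. When $m_i = -1$, the step computes $[U,R_i] = \textbf{RQ}((U^{(i+1)})^H A_i)$, so $(U^{(i+1)})^H A_i = R_i U$ with $U^{(i)} := U^H$, giving $A_i = U^{(i+1)} R_i U = U^{(i+1)} R_i (U^{(i)})^H$, and inverting yields $A_i^{-1} = U^{(i)} R_i^{-1} (U^{(i+1)})^H$, again matching the invariant.

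Finally I would multiply the invariants together. The product telescopes because each intermediate $(U^{(i+1)})^H U^{(i+1)} = I$:
\[
\prod_{i=1}^{k} A_i^{m_i} = U^{(1)} R_1^{m_1}(U^{(2)})^H U^{(2)} R_2^{m_2}(U^{(3)})^H \cdots U^{(k)} R_k^{m_k} V = U^{(1)} R_1^{m_1} R_2^{m_2} \cdots R_k^{m_k} V,
\]
where $U^{(1)}$ is the value of $U_{current}$ returned by the algorithm. This gives the asserted factorization. The only real obstacle is bookkeeping: the loop redefines $U_{current}$ differently in the two sign cases (assigning $U$ vs.\ $U^H$), and I must carefully check that in each case the factor to the right of $R_i^{m_i}$ is precisely $(U^{(i+1)})^H$ so that the telescoping is clean; the case analysis above is designed exactly to confirm this.
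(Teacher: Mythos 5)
Your proof is correct and follows essentially the same idea as the paper's: verify that each \textbf{QR}/\textbf{RQ} step preserves the running factorization, with the initial \textbf{RURV}/\textbf{RULV} supplying the base case. The paper only spells this out explicitly for $k=2$ (enumerating the four sign patterns) and defers to ``simple induction'' for larger $k$; your loop-invariant formulation, with $A_i^{m_i} = U^{(i)} R_i^{m_i} (U^{(i+1)})^H$ and the telescoping product, is a cleaner and more rigorous rendering of that same argument.
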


\begin{proof} Let us examine the case when $k = 2$ ($k>2$ results immediately through simple induction). 

Let us examine the cases:
\begin{enumerate}
\item $m_2 = 1$. In this case, $M_2 = A_1^{m_1} A_2$; the first \textbf{RURV} yields
$M_2 = A_1^{m_1} UR_2V$. \begin{enumerate}
\item if $m_1 = 1$, $M_2 = A_1 UR_2V$; performing \textbf{QR} on $A_1 U$ yields
$M_2= U_{current} R_1 R_2 V$.
\item if $m_1 = -1$, $M_2 = A_1^{-1} UR_2V$; performing \textbf{RQ} on $U^{H}A_1$ yields $M_2 = U_{current} R_1^{-1} R_2 V$.
\end{enumerate}
\item $m_2 = -1$. In this case, $M_2 = A_1^{m_1} A_2^{-1}$; the first \textbf{RULV} yields $M_2 = A_1^{m_1} U L_2^{-H} V = A_1^{m_1} U R_2^{-1} V$. \begin{enumerate}
\item if $m_1 = 1$, $M_2 = A_1 U L_2^{-H} V = A_1 U R_2^{-1} V$; performing \textbf{QR} on $A_1 U$ yields $M_2 = U_{current} R_1 R_2^{-1} V$.
\item finally, if $m_2 = -1$,  $M_2 = A_1^{-1} U L_2^{-H} V = A_1^{-1} U R_2^{-1}V$; performing \textbf{RQ} on $U^{H}A_1$ yields $M_2 = U_{current} R_1^{-1} R_2^{-1} V$.
\end{enumerate}
\end{enumerate}

Note now that in all cases $M_k = U_{current} R_1^{m_1} \ldots R_k^{m_k} V$. Since the inverse of an upper triangular matrix is upper triangular, and since the product of two upper triangular matrices is upper triangular, it follows that $R:=R_1^{m_1} \ldots R_k^{m_k}$ is upper triangular. Thus, we have obtained a rank-revealing decomposition of $M_k$; the same rank-revealing decomposition as if we have performed $QR$ on $M_k V^{H}$.
\end{proof}


This allows us to conclude the following important stability result for \textbf{GRURV}.  We note that it was claimed without proof as Theorem 2.3 in the technical report \cite{BDD11}.

\begin{theorem} \label{thm_grurv} In the absence of floating point
  error, the result of the algorithm \textbf{GRURV} is essentially the same as the result of 
\textbf{RURV} on the (explicitly formed) matrix $M_k$.  This means
that all results of Theorem \ref{thm_rurv} apply for the product
matrix $R =  R_1^{m_1} \ldots R_k^{m_k}$. \end{theorem}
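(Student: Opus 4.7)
My plan is to observe that almost all the work has been done by Lemma \ref{lem:grurv}, and the task reduces to verifying that the decomposition produced by \textbf{GRURV} is \emph{the} decomposition that would be produced by \textbf{RURV} applied to the explicitly formed product $M_k$.

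First, I would invoke Lemma \ref{lem:grurv} to conclude that \textbf{GRURV} outputs a factorization $M_k = U_{\text{current}} \cdot R \cdot V$, where $R := R_1^{m_1}\cdots R_k^{m_k}$. By the lemma, $R$ is upper triangular (since the product and inverses of upper triangular matrices are upper triangular), $U_{\text{current}}$ is unitary (being produced by a sequence of \textbf{QR}/\textbf{RQ} factorizations applied to unitary inputs), and the trailing factor $V$ is the Haar-distributed unitary matrix generated by the single \textbf{RURV} (or \textbf{RULV}) call at step $i=k$.

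Second, I would compare this with what \textbf{RURV} applied directly to $M_k$ would produce. By definition, that algorithm generates a Haar matrix $V'$, forms $M_k (V')^H$, and takes its \textbf{QR} factorization to obtain $U'$ and $R'$. Now observe two things: (a) the Haar matrix $V$ used inside \textbf{GRURV} has exactly the same distribution as $V'$, since both are generated by \textbf{QR} on an i.i.d.\ $N(0,1)$ matrix; and (b) conditioning on $V' = V$, the factorization $M_k V^H = U' R'$ produced by \textbf{QR} is unique up to the usual diagonal sign/phase convention. Since \textbf{GRURV} produces $M_k V^H = U_{\text{current}} R$ with $U_{\text{current}}$ unitary and $R$ upper triangular, uniqueness of \textbf{QR} gives $U_{\text{current}} = U'$ and $R = R'$ (up to that sign convention, which does not affect any of the bounds in Theorem \ref{thm_rurv}).

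Third, having established that the $R$ factor produced by \textbf{GRURV} coincides with the $R$ factor produced by \textbf{RURV} on $M_k$, every conclusion of Theorem \ref{thm_rurv} --- the bounds \eqref{unu}, \eqref{doi}, \eqref{trei}, and \eqref{four} on $\sigma_{\min}(R_{11})$, $\sigma_{\max}(R_{22})$, and $\|R_{11}^{-1}R_{12}\|_2$ --- applies verbatim to the product $R = R_1^{m_1}\cdots R_k^{m_k}$, with the singular values $\sigma_i$ taken to be those of $M_k$. The only subtlety, and the part of the argument that most merits care, is checking that the Haar matrix $V$ really does play the role of the Haar matrix from \textbf{RURV}; this follows because \textbf{GRURV} uses $V$ only once (at the initial step), never multiplies $V$ by any data-dependent quantity, and passes it through to the output unmodified, so the orthogonal invariance of the Haar distribution is preserved. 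No serious obstacle remains beyond this bookkeeping verification.
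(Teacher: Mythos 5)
Your proposal is correct and takes essentially the same approach as the paper, which deduces Theorem~\ref{thm_grurv} directly from the final observation in the proof of Lemma~\ref{lem:grurv} (namely, that $M_k = U_{\text{current}} R V$ with $R$ upper triangular, $U_{\text{current}}$ unitary, and $V$ Haar-distributed is exactly the decomposition that would result from performing \textbf{QR} on $M_k V^H$). The paper leaves the \textbf{QR}-uniqueness and distributional bookkeeping implicit, whereas you spell them out; that is a harmless elaboration rather than a different route.
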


Note that we may also return the matrices $R_1, \ldots, R_k$, from which the factor $R$ can later 
be reassembled, if desired.

\begin{theorem} {\bf GRURV} is backward stable. \end{theorem}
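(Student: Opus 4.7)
The plan is to induct on $k$ and mirror the proof of Theorem \ref{stable_rurv}. The tools at our disposal are: (a) QR and RQ implemented via Householder reflectors are backward stable; (b) multiplication by a square (approximately) unitary matrix is backward stable; and (c) the initial call to RURV or RULV on $A_k$ is stable by Theorem \ref{stable_rurv}. The target statement is that the computed outputs $\tilde{U}_{current}, \tilde{V}, \tilde{R}_1, \ldots, \tilde{R}_k$ can be matched to the exact GRURV outputs on inputs $A_i + dA_i$ for some perturbations satisfying $\|dA_i\|/\|A_i\| = O(\epsilon_{mach})$, together with exactly unitary matrices $U_{current}+dU$ and $V+dV$ lying within $O(\epsilon_{mach})$ of the corresponding computed matrices.

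The base case $k=1$ is exactly Theorem \ref{stable_rurv}, with the RULV branch reducing to RURV on $A_1^H$ by the remark in Section \ref{defs}. For the inductive step, consider the loop iteration processing $A_i$. If $m_i = 1$, the algorithm forms $\hat{A}_i := A_i \cdot \tilde{U}_{current}$ and then calls QR. By (b), the computed $\hat{A}_i$ equals an exact product $(A_i + \delta_1)\,U_{current}$ with $\|\delta_1\|/\|A_i\| = O(\epsilon_{mach})$, where $U_{current}$ is the exactly unitary matrix produced by the inductive hypothesis. By (a), the subsequent QR yields $(\tilde{U}_{current}^{\text{new}} + dU)\,\tilde{R}_i = \hat{A}_i + \delta_2$ with $\|\delta_2\|/\|\hat{A}_i\| = O(\epsilon_{mach})$; using that $U_{current}$ is unitary, $\|\delta_2\|/\|A_i\| = O(\epsilon_{mach})$ as well. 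Combining the two perturbations and absorbing them into a single $dA_i$ of relative size $O(\epsilon_{mach})$ proves the inductive claim. The $m_i = -1$ branch, which forms $\tilde{U}_{current}^H A_i$ and runs RQ, is handled symmetrically, using that RQ is the time-reversal of QR and equally stable.

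The main obstacle is the bookkeeping: one must verify that the perturbations on individual $A_i$'s remain first-order despite propagating through the loop, and that the unitary factor carried between iterations is always within $O(\epsilon_{mach})$ of an exactly unitary matrix. The first point holds because each $A_i$ is touched by only a constant number of arithmetic operations (one multiplication and one QR or RQ), so its perturbation does not accumulate contributions from other steps. The second point is a standard consequence of Householder-based QR, whose computed orthogonal factor satisfies $\tilde{U}_{current}^H \tilde{U}_{current} = I + O(\epsilon_{mach})$. Together with Lemma \ref{lem:grurv}, which guarantees the algebraic identity $M_k = U_{current} R_1^{m_1} \cdots R_k^{m_k} V$ in exact arithmetic, this reasoning establishes that the algorithm on inputs $A_1, \ldots, A_k$ produces the exact GRURV output on inputs $A_1 + dA_1, \ldots, A_k + dA_k$, which is backward stability.
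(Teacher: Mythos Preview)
Your proposal is correct and follows essentially the same approach as the paper's proof: both reduce to the backward stability of \textbf{QR}/\textbf{RQ}, the backward stability of multiplication by a (nearly) unitary matrix, and Theorem~\ref{stable_rurv} for the base step, then propagate these through the loop by induction. The paper phrases the induction as ``it suffices to consider $k=2$'' and writes out the $m_1=1$, $m_2=\pm 1$ cases explicitly, assembling the identity $(U_1+dU_1)R_1R_2^{m_2}(V+dV)=(A_1+dA_1)(A_2+dA_2)^{m_2}$, while you run the induction step-by-step over the loop index; the content is the same.
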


\begin{proof} Simple induction once again shows that it is sufficient
  to consider the case $k=2$.  

For the case $k=2$, we will do the calculations for $m_1=1$ and $m_2 =
\pm 1$; the other two cases can be dealt with in the same fashion. 

Note {\bf QR}, {\bf RQ}, and {\bf QL} can be performed in a backward-stable
manner, all multiplications performed are multiplications by unitary
matrices (and thus backward stable), and we have shown that {\bf RURV} (respectively {\bf RULV}, as
the only difference is the application of {\bf QL} instead of {\bf
  QR}) is also backward stable. 

Let now $m_1 = 1$ and let $[U_2, R_2]$ be the outputs of the first {\bf
	QR} operation performed by {\bf GRURV}, and $[U_1, R_1]$ be the outputs of the second. Then stability of {\bf QR} gives $(U_1+d U_1)R_1 = A_1 \cdot U_2 + d(A_1 \cdot U_2)$ for some $d( A_1 \cdot U_2)$ satisfying $||d(A_1 \cdot U_2) ||/|| A_1 \cdot U_2|| =
O(\epsilon_{mach})$.  However, the stability of {\bf RURV} ensures the existence of $dU_2'$ satisfying $||dU_2'||/||U_2|| =
O(\epsilon_{mach})$ such that $U_2 + dU_2'$ is orthogonal.  As this establishes $U_2$ has condition number $1-O(\epsilon)$, we therefore conclude $d(A_1 \cdot U_2) = (d A_1) \cdot U_2$
  for some $||dA_1||/||A_1|| =
O(\epsilon_{mach})$.  More specifically, this is for $dA_1 = d(A_1 U_2) U_2^{-1}$. Combining these steps, we have shown 
\[
(U_1+d U_1)R_1 = (A_1 + dA_1) \cdot U_2 \, .
\]
Now we break into cases:


\begin{itemize}
	\item $m_2=1$. Then a simple consequence of Theorem \ref{stable_rurv} is $U_{2}R_2 (V+dV) = A_2 + dA_2$, with $||dA_2||/||A_2||
  = O ( \epsilon_{mach})$.

Putting it all together, 
\[
	(U_1+dU_1) R_1 R_2 (V+dV) = (A_1 + dA_1) \cdot U_2 R_2 (V+dV) = (A_1+dA_1)(A_2+dA_2)~,
\]
with $||dA_1||/||A_1||
  = O ( \epsilon_{mach})$ and $||dA_2||/||A_2|| =
O(\epsilon_{mach})$, so in this case {\bf GRURV} is backward stable. 

\item $m_2 = -1$. Then $U_2 R_2^{H} (V + dV)= (A_2+ dA_2)^H$, with $||dA_2||/||A_2||
  = O ( \epsilon_{mach})$, again because {\bf RULV} is
  backward stable. By Hermitian transposing and inverting, we obtain
		that $U_2 R_2^{-1} (V+dV) = (A_2+dA_2)^{-1}$. 

Putting it all together,
\[
	(U_1+dU_1) R_1 R_2^{-1} (V+dV) = (A_1+dA_1)\cdot U_2 R_2^{-1} (V+dV) = (A_1+dA_1)(A_2+dA_2)^{-1}~,
\]
with $||dA_1||/||A_1||
  = O ( \epsilon_{mach})$ and $||dA_2||/||A_2|| =
O(\epsilon_{mach})$, so again {\bf GRURV} is backward stable.
\end{itemize}

The other two cases are dealt with in the same fashion, and simple
induction on $k$ shows that {\bf GRURV} is backward stable for any $k
\geq 2$. 
\end{proof}

\section{Numerical Experiments} \label{numerics}

In this section, we present numerical experiments to test the four bounds of Theorem \ref{thm_rurv}.
Since Theorem \ref{thm_rurv} utilizes the asymptotic behavior of singular values of submatrices of Haar matrices, becoming more accurate as the dimensions of the matrix and submatrix increase, we will perform tests over a range of matrix dimensions.

To test the effects of dimension and gap on the effectiveness of \textbf{RURV}, we set up problems with specified singular value distributions and Haar distributed left and right singular vectors.
For our experiments, we consider two types of singular value distributions:
\begin{itemize}
	\item stair step distribution: $\sigma_1=\sigma_r$, $\sigma_{r+1}=\sigma_n=1$, with a specified gap $\sigma_r/\sigma_{r+1}$;
	\item logarithmically spaced distribution: $\sigma_1= 10^{13}$, $\sigma_n =1$, $\sigma_i/\sigma_{i+1} = \sigma_j/\sigma_{j+1}$ for all $i,j \neq r$, with a specified gap $\sigma_r/\sigma_{r+1}$.
\end{itemize}
We also perform two types of experiments
\begin{itemize}
	\item fix the matrix dimension $n=1500$, and vary the gap $\sigma_r/\sigma_{r+1}$ ranging from $10^1$ to $10^{10}$;
	\item fix the gap $\sigma_r/\sigma_{r+1}=10^7$, and vary the matrix dimension $n$ ranging from 250 to 2000.
\end{itemize}
Across all experiments, we fix $r=n/2$ and repeat each experiment 1000 times, constructing matrices with the fixed singular value distribution and Haar-distributed random singular vectors.

We compare the results of these four experiments against the theoretical bounds of Theorem \ref{thm_rurv} in Figures \ref{fig:spread1varygap}, \ref{fig:spread1varydim}, \ref{fig:spread100varygap}, and \ref{fig:spread100varydim}.
In each figure, the top left plot shows the ratio $\sigma_1/ \sigma_{\text{min}}(R_{11})$ and Inequality \eqref{unu}, the top right plot shows the ratio $\sigma_{\text{max}}(R_{22}) / \sigma_{r+1}$ and Inequality \eqref{doi}, the bottom left plot shows $\|R_{11}^{-1}R_{22}\|_2$ and Inequalities \eqref{trei} and \eqref{four}, and the bottom right plot shows an example singular value distribution to illustrate the distribution as a stair step or logarithmically spaced.
The experimental results are presented as box plots, with boxes corresponding to the inter-quartile range (middle 50\%) and horizontal lines corresponding to min, median, and max.
In order to compare against the probabilistic bounds, across all experiments, we specify a fixed failure probability of $3\%$ ($\delta=0.03$) to obtain the theoretical bounds, and we plot the 97th percentile of the experimental data as a black line.
The theoretical bounds \eqref{unu}, \eqref{doi}, and \eqref{four} appear as blue lines with asterisk markers; they have values 
\begin{eqnarray*}
\frac{\sigma_r}{\sigma_{\min}(R_{11})} & \leq & \frac{2.02}{\delta} \sqrt{r (n-r)} ~,\\ 
\frac{\sigma_{\max} (R_{22})}{\sigma_{r+1}} & \leq & \frac{2.02}{\delta} \sqrt{r(n-r)}~,\\
||R_{11}^{-1} R_{12}||_2 & \leq & \frac{4.04}{\delta} \cdot \sqrt{r (n-r)} + 1 ~.
\end{eqnarray*}
For \eqref{four} from Theorem \ref{thm_rurv} to be valid, $\sigma_r/\sigma_{r+1} > \sqrt{2} \cdot 1.01 \cdot \frac{n}{\delta}$; we do not plot the bound when it does not apply (see Figures \ref{fig:spread1varygap} and \ref{fig:spread100varygap}).
Theoretical bound \eqref{trei} appears as a red line with circle markers.

Overall, we observe that the probabilistic bounds \eqref{unu}, \eqref{doi}, \eqref{four} hold up empirically and are very tight given that the black line never exceeds the blue line but remains close in all experiments.

In the experiment varying the gap for the stair step singular value distribution (Figure \ref{fig:spread1varygap}), a deterministic bound governs the behavior of the algorithm for small gaps.
The blue dotted line corresponds to the deterministic upper bounds 
\begin{eqnarray}
\label{eq:det1} \frac{\sigma_r}{\sigma_{\min}(R_{11})} \leq \frac{\sigma_r}{\sigma_n}~, \\
\label{eq:det2} \frac{\sigma_{\max} (R_{22})}{\sigma_{r+1}} \leq \frac{\sigma_1}{\sigma_{r+1}}~, \\
\label{eq:det3} \|R_{11}^{-1} R_{12}\|_2 \leq \frac{\sigma_1}{\sigma_n}~.
\end{eqnarray}
We also plot these bounds in the same experiment for the logarithmically spaced distribution (Figure \ref{fig:spread100varygap}), but in that case they are very loose.

To get a more detailed view of the distributions of these quantities than the box plots, we provide histograms for the empirical data for two examples from the experiments, one for each singular value distribution.
In the histogram plots, we also include vertical bars to show the 97th percentile (black line) and the theoretical bound (blue line with asterisk marker).
Figure \ref{fig:spread1varydim-hist} shows the logarithms of the quantities $\sigma_1/ \sigma_{\text{min}}(R_{11})$, $\sigma_{\text{max}}(R_{22}) / \sigma_{r+1}$, and $\|R_{11}^{-1}R_{22}\|_2$ for dimension $n=1500$ and gap $\sigma_r/\sigma_{r+1}=10^7$.
Figure \ref{fig:spread100varydim-hist} shows the same quantities for the logarithmically space distribution with the same dimension ($n=1500$) and gap ($\sigma_r/\sigma_{r+1}=10^7$).

\newcommand{\figscale}{0.75}

\begin{figure}
\centering
\includegraphics[scale=\figscale]{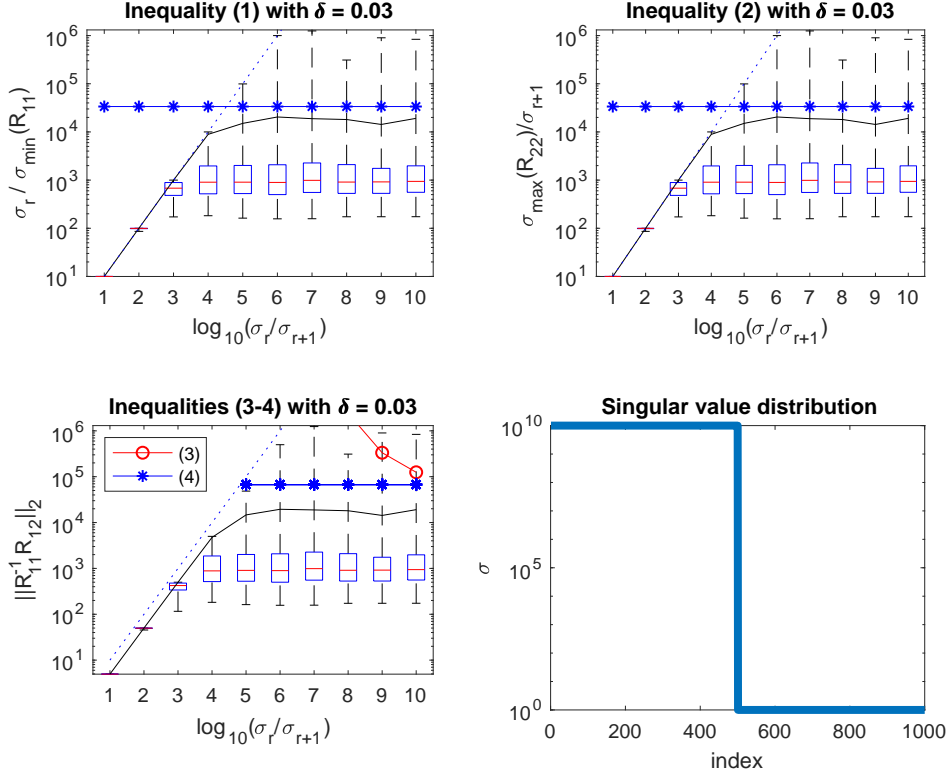}
\caption{Experimental results for test matrices with stair step distribution with dimension $n=1500$ and varying gap $\sigma_r/\sigma_{r+1}$ given on the x-axis. The black line is the 97th percentile. The solid blue line with markers is the corresponding error bound. The dotted blue line is a deterministic bound given by (\ref{eq:det1}--\ref{eq:det3}).  The example singular value distribution corresponds to $\sigma_r/\sigma_{r+1}=10^{10}$.}
\label{fig:spread1varygap}
\end{figure}

\begin{figure}
\centering
\includegraphics[scale=\figscale]{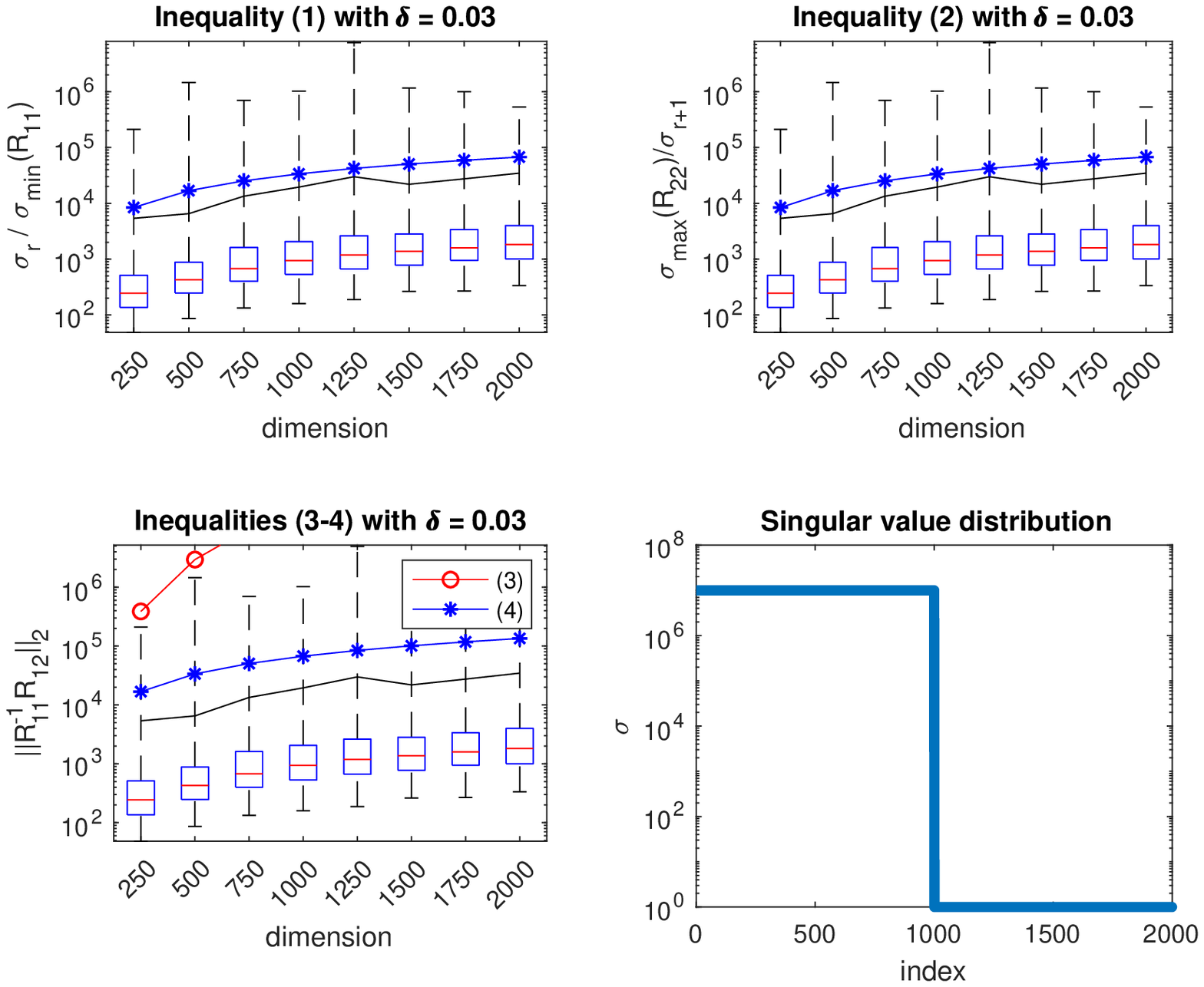}
\caption{Experimental results for test matrices with stair step distribution with  gap $\sigma_r/\sigma_{r+1}=10^7$ and varying dimension given on the x-axis. The black line is the 97th percentile. The solid blue line with markers is the corresponding error bound. The example singular value distribution corresponds to $n=2000$.}
\label{fig:spread1varydim}
\end{figure}

\begin{figure}
\centering
\includegraphics[scale=\figscale]{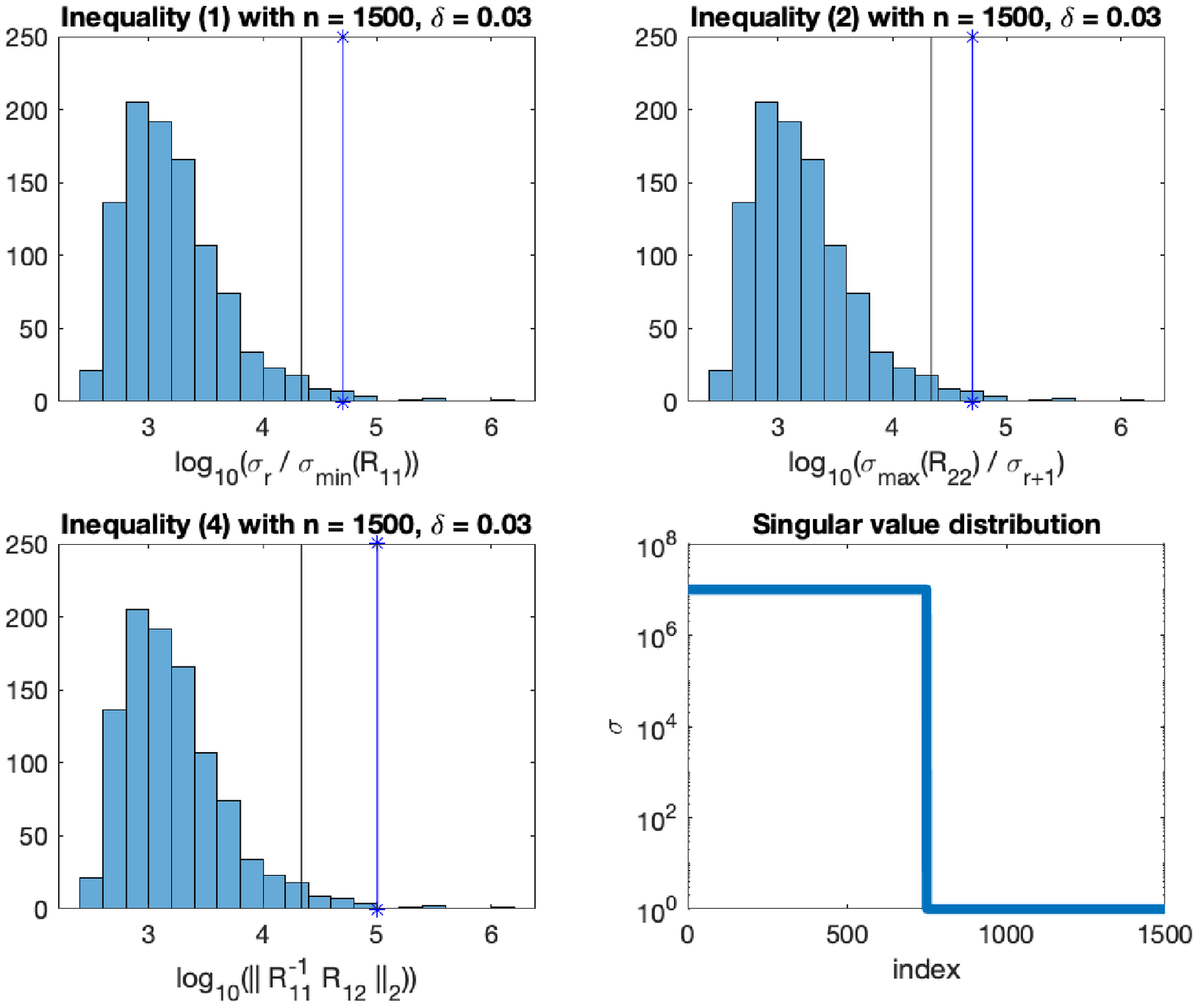}
	\caption{Histograms of 1000 trials of the stair step distribution from $n=1500$ and $\sigma_r/\sigma_{r+1}=10^7$. The black line indicates the 97th percentile. The blue line indicates where our theoretical bounds from \eqref{unu}, \eqref{doi}, and \eqref{four} predict the $97\%$ confidence interval to be.}
\label{fig:spread1varydim-hist}
\end{figure}

\begin{figure}
\centering
\includegraphics[scale=\figscale]{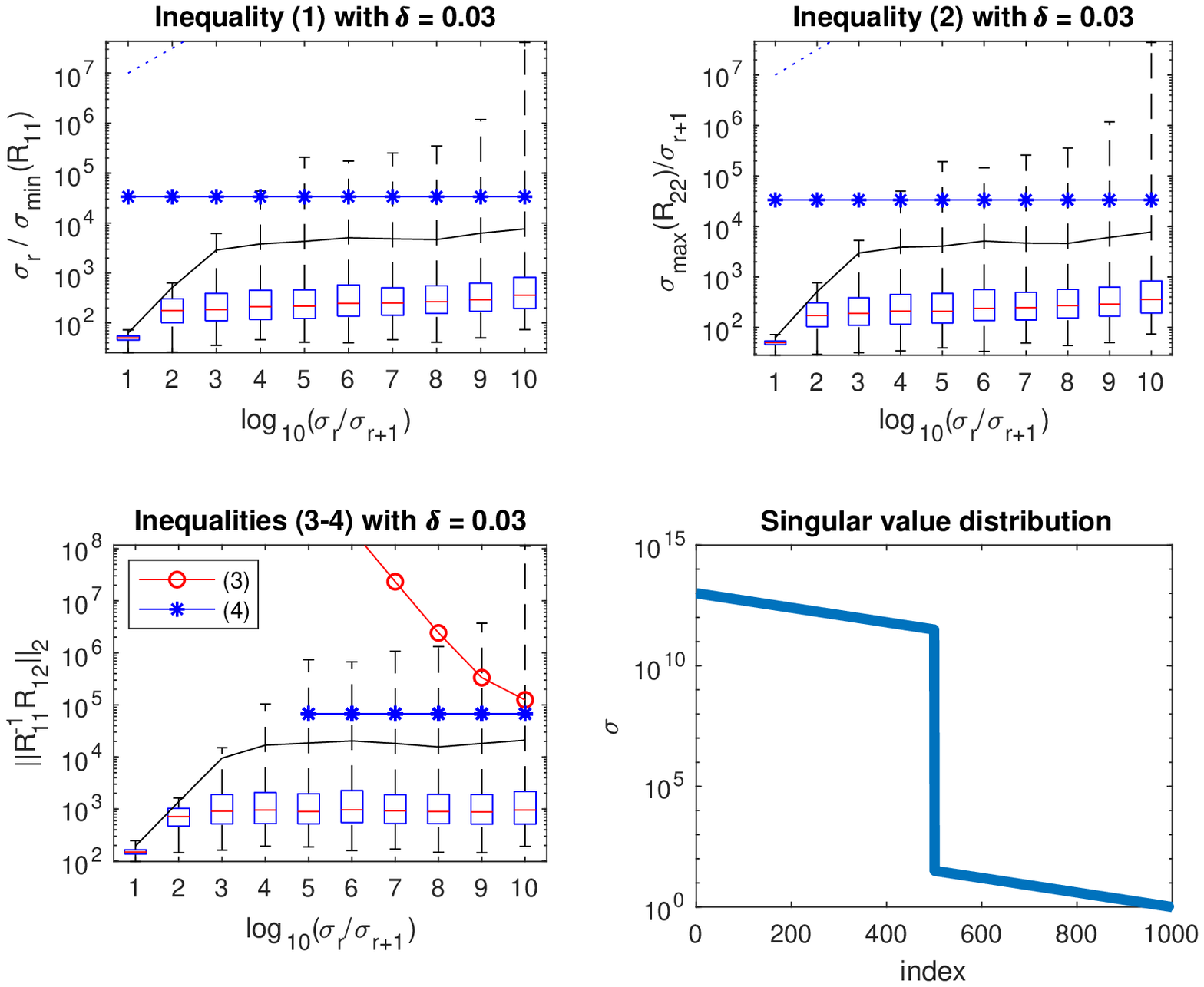}
\caption{Experimental results for test matrices with logarithmically spaced distribution with dimension $n=1500$ and varying gap $\sigma_r/\sigma_{r+1}$ given on the x-axis. The black line is the 97th percentile. The solid blue line with markers is the corresponding error bound. The dotted blue line is a deterministic bound given by (\ref{eq:det1}--\ref{eq:det3}).  The example singular value distribution corresponds to $\sigma_r/\sigma_{r+1}=10^{10}$.   }
\label{fig:spread100varygap}
\end{figure}

\begin{figure}
\centering
\includegraphics[scale=0.7]{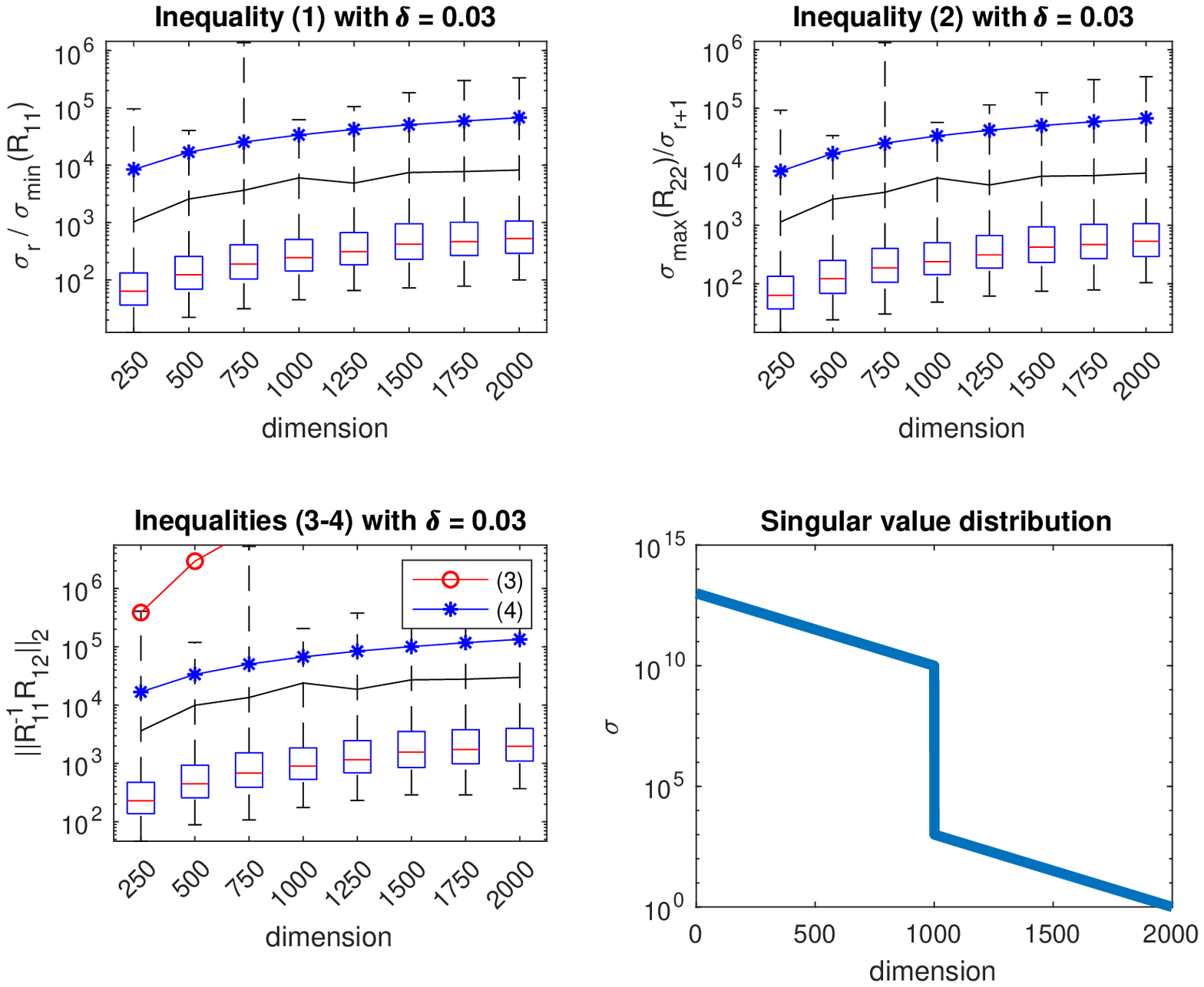}
\caption{Experimental results for test matrices with logarithmically spaced distribution with  gap $\sigma_r/\sigma_{r+1}=10^7$ and varying dimension given on the x-axis. The black line is the 97th percentile. The solid blue line with markers is the corresponding error bound. The example singular value distribution corresponds to $n=2000$.}
\label{fig:spread100varydim}
\end{figure}

\begin{figure}
\centering
\includegraphics[scale=0.7]{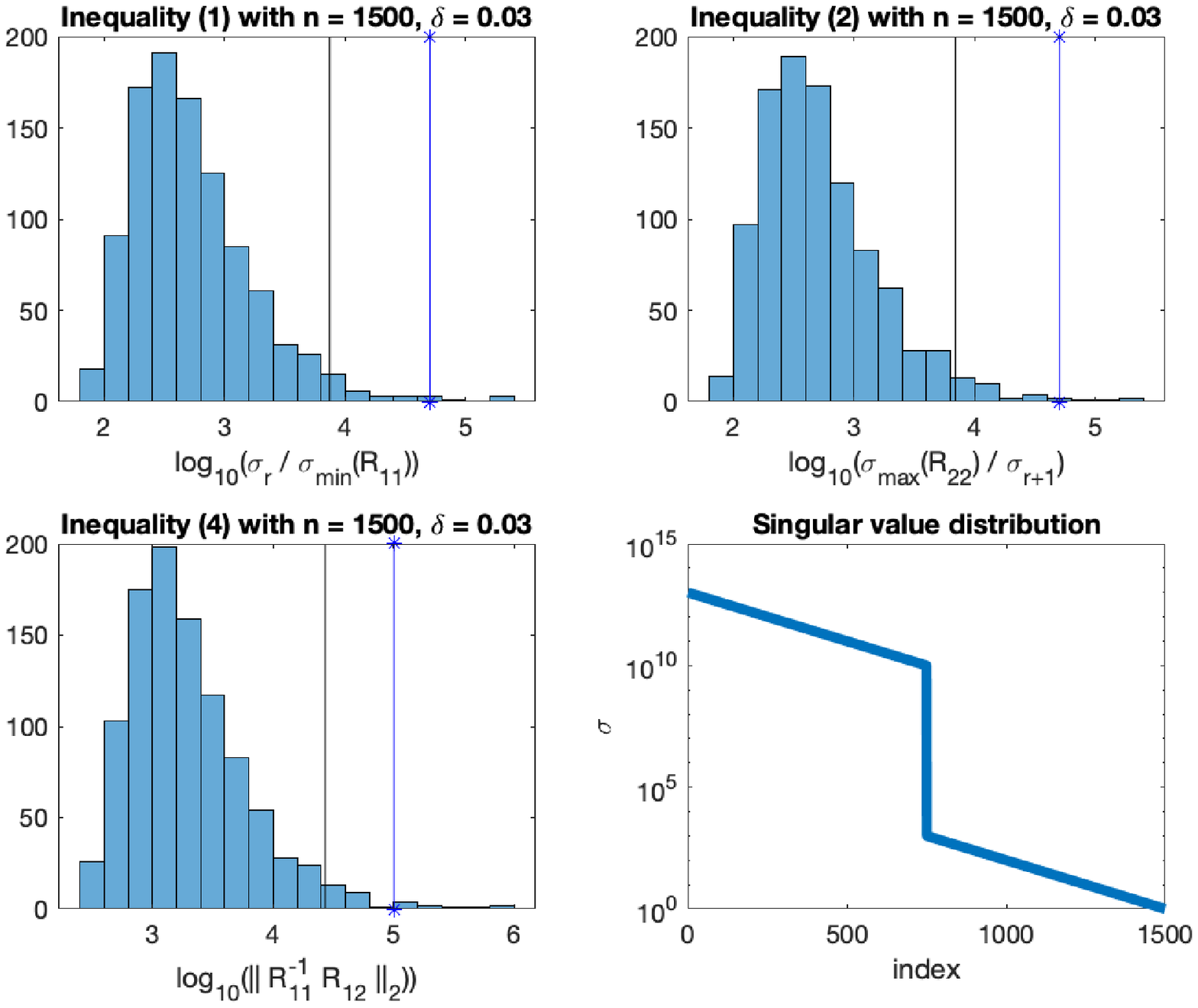}
\caption{Histograms of 1000 trials of the logarithmically spaced distribution from $n=1500$ and $\sigma_r/\sigma_{r+1}=10^7$. The black line indicates the 97th percentile. The blue line indicates where our theoretical bounds from \eqref{unu}, \eqref{doi}, and \eqref{four} predict the $97\%$ confidence interval to be.}
\label{fig:spread100varydim-hist}
\end{figure}

\section{Conclusion}
We have introduced an algorithm for finding the QR-factorization of products of matrices and their inverses, without explicitly computing the products or inverses.  This algorithm is notable for its simplicity, strong theoretical underpinnings, and usefulness as a subroutine within the divide-and-conquer approach to the generalized eigenvalue problem.  Among other important properties, {\bf GURV} was shown to be strongly rank-revealing, backward stable, and communication-optimal.  Moreover, extensive numerical experiments demonstrate that the bounds we presented are essentially tight.

\bibliographystyle{alpha}
\bibliography{rurv}
     
\end{document}